\newcommand{\dd}{\delta}
\newcommand{\N}{\mathbb{N}}
\newcommand{\Z}{\mathbb{Z}}
\newcommand{\Q}{\mathbb{Q}}
\newcommand{\J}{\mathbf{J}}
\newcommand{\T}{\mathcal{T}}
\newcommand{\Sp}{\mathrm{Sp}}
\newcommand{\fkt}{\mathfrak{t}}
\newcommand{\fks}{\mathfrak{s}}
\newcommand{\fku}{\mathfrak{u}}
\newcommand{\sym}[1]{\mathfrak{S}_{#1}}
\newcommand{\uu}[1]{\underline{#1}}
\newcommand{\ses}[3]{0\rightarrow #1\rightarrow #2 \rightarrow #3\rightarrow 0}
\newcommand{\op}{\mathrm{op}}
\newcommand{\tp}{\mathrm{tp}}
\renewcommand{\leq}{\leqslant}
\renewcommand{\geq}{\geqslant}
\renewcommand{\phi}{\varphi}
\renewcommand{\Im}{\mathrm{Im}}
\newcommand{\pushright}[1]{\ifmeasuring@#1\else\omit\hfill$\displaystyle#1$\fi\ignorespaces}
\newcommand{\pushleft}[1]{\ifmeasuring@#1\else\omit$\displaystyle#1$\hfill\fi\ignorespaces}
\theoremstyle{plain}
\newtheorem{thm}{Theorem}
\newtheorem{prop}[thm]{Proposition}
\newtheorem{lem}[thm]{Lemma}
\newtheorem*{thm*}{Theorem}
\theoremstyle{definition}
\newtheorem{defn}[thm]{Definition}
\newtheorem{rem}[thm]{Remark}
\newtheorem{eg}[thm]{Example}
\newtheorem*{rem*}{Remark}
\title{On central idempotents in the Brauer algebra}
\author{O. H. King \and P. P. Martin \and A. E. Parker}
\address{Department of Mathematics \\ University of Leeds \\ Leeds, LS2 9JT \\ UK}
\email{O.H.King@leeds.ac.uk}
\email{ppmartin@maths.leeds.ac.uk}
\email{A.E.Parker@leeds.ac.uk}
\begin{document}
\begin{abstract}
  We provide a method for constructing central idempotents in the Brauer algebra relating to the splitting of certain short exact sequences. We also determine some of the primitive central idempotents, and relate properties of the idempotents to known facts about the representation theory of the algebra.
\end{abstract}
\maketitle

\section{Introduction}
\label{sec:intro}

The Brauer algebra $B_n(\delta)$ was introduced by Brauer in
\cite{brauer}.
Brauer used it in place of the symmetric group
$\sym{n}$ when
constructing an analogue of Schur-Weyl duality for the orthogonal and
symplectic groups. 
In this construction the parameter $\delta$ is chosen to match the
dual group.
Since then the algebra has been %defined and 
studied as a $k$-algebra in its own right, with $\delta\in k$. 
In particular, 
it can be defined as a  $\Z[\dd]$-algebra.
Brown \cite{brown} showed that the algebra is generically semisimple
over the complex field and 
effectively constructed modules for the  $\Z[\dd]$-algebra that pass
to the simple modules in this case.
%classified the simple modules in this case. 
This allows us to study the algebra using a modular system \cite{Brauer39}.
Building on work of Wenzl \cite{wenzl}, Rui \cite{rui} gave an
explicit semisimplicity criterion for each $n$. 
There has been much subsequent work in determining the structure of 
lifts of Brown's ordinary irreducible modules
in the non-semisimple cases
(see for example \cite{HartmannPaget06,cdm} for references).
Graham and Lehrer
\cite{grahamlehrer} showed that 
Brown's modules are the cell modules of 
$B_n(\dd)$   %belongs to a family of algebras called cellular algebras.
regarded as a cellular algebra. 
%which coincide with the so-called cell modules. 
Finally, the Cartan decomposition matrix was computed in the complex
case \cite{martin}. 
This triggered interesting works on geometric aspects such as 
\cite{CoxDevisscher,Stroppel}.
However the case of characteristic $p > 0$ remains
essentially open (cf. \cite{Shalile,DevisscherMartin}), 
and it is this problem that motivates the present work.

In characteristic zero, the representation theory of the Brauer
algebra is already quite rich (see for instance \cite{cdm},
\cite{martin}). 
%As mentioned above it may not be semisimple, and in
In the non-semisimple cases we can find cell modules of arbitrary Loewy
length by increasing $n$. In positive characteristic it becomes
dramatically more complicated still, due in part to the fact that it
contains the representation theory of the symmetric group. 

As noted, the Brauer algebra 
may be  %is  traditionally 
defined as a $\Z[\dd]$-algebra,
i.e. over a commutative ring with a single parameter. 
%The advantage of this is that it
This allows us to define integral forms of the cell
modules, which allows for independent specialisation of the parameter
and the field via extension of scalars. In our case, we will need to
consider a ring where the above is possible, but where we can also
invert certain monic polynomials in $\dd$. This ring $K$ will be
introduced in Section \ref{sec:brauer}. 

A useful realisation of the Brauer algebra is that as a diagram
algebra, i.e. as a subalgebra of the partition algebra. 
In this way, a basis of the algebra is the set of graphs with $2n$
nodes such that each node is joined to precisely one other by a single
edge. This allows us to define ideals based on a so-called propagating
number, and also view the group algebra of the symmetric group
$\sym{n}$ inside $B_n(\dd)$. These concepts will be made precise in
Section \ref{sec:brauer}. 

In the general setting of algebraic representation theory, 
a fundamental question asks how the
representation theory of   an algebra 
with a splitting modular system \cite[\S1.9]{benson}
is manifested in the primitive
central idempotents
of its ordinary case. 
For instance in a unital algebra $\Lambda$ over
an Artinian ring, a decomposition of $1_\Lambda$ into a sum of
primitive central idempotents gives the blocks of the algebra
(see e.g. \cite[\S1.8]{benson}). 
In this paper we will construct a family
of central idempotents of $B_n(\dd)$. 
We then  %and 
use existing information
about the Brauer algebra to establish connections to representation
theory.

In principle there are several possible 
approaches to finding idempotents in the Brauer algebra. 
A result of Kilmoyer \cite[Proposition 9.17]{curtisreiner} allows one
to use the characters of a semisimple algebra to construct its
primitive central idempotents. Leduc and Ram \cite{leducram} express
the Brauer algebra as a multimatrix algebra and then give a method for
finding primitive central idempotents by considering pairs of paths in
the Bratteli diagram. 
Isaev and Molev \cite{isaevmolev} build on this by introducing a
method based on the `Jucys-Murphy elements' of the Brauer algebra. A recent paper of Doty, Lauve and Seelinger \cite{Doty} determines the central idempotents in so-called multiplicity free families of algebras, of which the Brauer algebra is one.
However for the types of questions we would like to ask regarding the Brauer algebra, the method we describe in this paper has several
advantages over the existing ones. In particular our choice of the
ring $K$ gives us control over the form of the coefficients appearing
in each idempotent. This allows us to easily draw conclusions about
the representation theory of the algebra before we have arrived at the
final result. 
Use of Kilmoyer's proposition requires us to know the characters of the Brauer algebra, which in itself is a non-trivial task, and the method employed by Leduc and Ram becomes rather inefficient as $n$ increases, and moreover is only valid over $\overline{\Q(\delta)}$. As such it is difficult to see any results regarding the integral or exceptional representation theory of the algebra until the process has finished. Similar comments can be made about Isaev and Molev's method, where interim steps are also based upon paths in the Bratteli diagram. Use of the Bratteli diagram, which becomes increasingly complicated as $n$ grows, is also necessary for Doty, Lauve and Seelinger's method.

Our approach mirrors that of \cite{martinwoodcock}, in that we
construct splitting idempotents of certain exact sequences. 
We will
see later that a short exact sequence of $\Lambda$-bimodules
\begin{equation}\ses{J}{\Lambda}{\Lambda/J}\label{eq:ses}\end{equation}
splits if and only if there is an element $\phi_J\in \Lambda$ satisfying
\begin{enumerate}
\item $\phi_J\equiv1_\Lambda$ mod $J$, and
\item $J\phi_J=\phi_JJ=0$.
\end{enumerate}
If $\phi_J$ exists then it is unique and is a central idempotent in
$\Lambda$. We call it the splitting idempotent of the sequence
\eqref{eq:ses}. In our case, we will consider ideals
$\overline{J_n}(\ell)$ of $B_n(\dd)$ generated by diagrams with $\ell$
or fewer propagating lines (see Section \ref{sec:brauer}).
We use as a labelling set, tableaux with entries from the set $\{N,S,
P\}$ under an equivalence, (see Definition \ref{defn:tn})   and prove
the following:
\begin{thm*}
  Let $\{A_\fkt~:~\fkt\in\T_n(\ell)\}$ be a set of representatives of
  the orbit of diagrams generating $\overline{J_n}(\ell)$ under
  conjugation by $\sym{n}$, and $D_\fkt$ be the sum of elements in the
  orbit containing $A_\fkt$. Define
  $X_n(\ell)=\sum_{\fkt\in\T_n(\ell)}c_\fkt D_\fkt$ for some scalars
  $c_\fkt$. Then for $\overline{u}\in \overline{J_n}(\ell)$ the
  equation
  \[\overline{u}X_n(\ell)=-\overline{u}\]
is always solvable in the $c_\fkt$. Moreover, setting
$\phi_n(\ell)=1+X_n(\ell)$ gives the splitting idempotent of the short
exact sequence
\[\ses{\overline{J_n}(\ell)}{B_n(\dd)}{B_n(\dd)/\overline{J_n}(\ell)}.\]
\end{thm*}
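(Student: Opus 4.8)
The plan is to verify the two conditions characterising a splitting idempotent for the element $\phi_n(\ell)=1+X_n(\ell)$ and then to quote the equivalence recorded immediately after \eqref{eq:ses}. Condition (i), that $\phi_n(\ell)\equiv 1$ modulo $\overline{J_n}(\ell)$, holds for \emph{any} choice of the scalars $c_\fkt$, since every $D_\fkt$ is a sum of diagrams with at most $\ell$ propagating lines and hence $X_n(\ell)\in\overline{J_n}(\ell)$. All the work therefore goes into condition (ii), $\overline{J_n}(\ell)\phi_n(\ell)=\phi_n(\ell)\overline{J_n}(\ell)=0$; after expanding $\phi_n(\ell)$, the left-hand identity is exactly the assertion that the $c_\fkt$ can be chosen so that $\overline u\,X_n(\ell)=-\overline u$ for every $\overline u\in\overline{J_n}(\ell)$, which is the displayed equation.

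First I would turn this into a finite linear problem. Because the propagating number does not increase under multiplication, $\overline{J_n}(\ell)$ is spanned over $K$ by the diagrams with at most $\ell$ propagating lines, so by linearity it suffices to solve $d\,X_n(\ell)=-d$ for each such diagram $d$. Each orbit sum $D_\fkt$ is invariant under $\sym{n}$-conjugation, so $X_n(\ell)$ lies in the centraliser $Z$ of $K\sym{n}$ in $B_n(\dd)$; consequently, if $d\,X_n(\ell)=-d$ holds for one diagram it holds for every conjugate $\sigma d\sigma^{-1}$, and it is enough to impose the equation on the representatives $A_\fks$. Summing over orbits, this is equivalent to $D_\fks\,X_n(\ell)=-D_\fks$ for all $\fks\in\T_n(\ell)$. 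Working inside $Z$, which carries the $D_\fkt$ as a $K$-basis with structure constants $D_\fks D_\fkt=\sum_\fku N^{\fku}_{\fks\fkt}D_\fku$ in $K$, and comparing coefficients of each basis element turns the requirement into a $K$-linear system in the unknowns $c_\fkt$.

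The heart of the matter is solving this system, and here the propagating-number filtration does the work. Since the propagating number of a product is bounded by that of each factor, $N^{\fku}_{\fks\fkt}$ vanishes unless the propagating number of $\fku$ is at most that of $\fks$ (and of $\fkt$); ordering the orbits by decreasing propagating number accordingly makes the system block triangular. The top block is governed by the multiplication of the diagrams of maximal propagating number modulo lower terms, where the only surviving contributions come from the diagrams in the orbit of $\fkt$ that reconnect the through-strands of $A_\fks$ without breaking any, and a direct count of the closed loops removed in these products identifies the diagonal datum as a monic polynomial in $\dd$. This is exactly the point at which the ground ring $K$ enters: it is constructed so that these monic polynomials are invertible, whence each diagonal block is invertible and the $c_\fkt$ may be determined by downward induction on the propagating number, the consistency at each level reflecting that the corresponding cell form becomes invertible over $K$. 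I expect the main obstacle to be precisely this diagrammatic bookkeeping---pinning down the diagonal data and confirming that they are the monic polynomials inverted in $K$---whereas the triangularity itself is a formal consequence of sub-multiplicativity of the propagating number.

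It remains to recover the right-hand identity and to conclude. Let $*$ denote the diagram anti-involution reflecting top and bottom; it preserves $\overline{J_n}(\ell)$ (reflection fixes the propagating number) and satisfies $1^*=1$. Writing $\phi=\phi_n(\ell)$, the left-hand identity is the statement $\overline{J_n}(\ell)\,\phi=0$, and applying $*$ gives $\phi^*\,\overline{J_n}(\ell)=0$, while $\phi^*=1+X_n(\ell)^*\equiv 1$ modulo the ideal since $X_n(\ell)^*\in\overline{J_n}(\ell)$. Using that $\phi-1$ and $\phi^*-1$ both lie in $\overline{J_n}(\ell)$ together with the two annihilation properties, one computes $\phi^*\phi=\phi$ and $\phi^*\phi=\phi^*$, so $\phi=\phi^*$ and hence $\phi$ itself satisfies both halves of condition (ii). By the characterisation following \eqref{eq:ses}, $\phi_n(\ell)=1+X_n(\ell)$ is then the unique central idempotent splitting the stated short exact sequence, which completes the proof.
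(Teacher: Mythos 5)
Your overall skeleton (condition (i) is automatic, reduce condition (ii) to a linear system using $\sym{n}$-invariance, recover the right-hand annihilation via the flip anti-involution) matches the paper's, but the core linear-algebra step is where the proposal breaks down. First, a small logical slip: centrality of $X_n(\ell)$ does reduce the problem to $A_\fks X_n(\ell)=-A_\fks$ for one representative per orbit, but summing these over the orbit to get $D_\fks X_n(\ell)=-D_\fks$ is only an implication, not an equivalence -- conjugates of a nonzero error term can cancel in the sum -- so solving the orbit-sum system does not by itself verify condition (ii). More seriously, the system you actually propose to solve, built from the structure constants $D_\fks D_\fkt=\sum_\fku N^\fku_{\fks\fkt}D_\fku$ of the centraliser algebra, does not have $K$-invertible diagonal blocks: the ``diagonal data'' are not single monic polynomials but carry integer reconnection multiplicities (how many diagrams of one orbit close up the arcs of another), and a unit of $K$ must be congruent to $\pm1$ modulo $K\dd^{-1}$. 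Already in $B_4$ with $\ell=0$ the equation $\overline{D_{\fkt^{(0)}_1}}X_4(0)=-\overline{D_{\fkt^{(0)}_1}}$, compared against the two orbit sums, has leading matrix $\left(\begin{smallmatrix}1&1\\2&0\end{smallmatrix}\right)$ modulo $K\dd^{-1}$, of determinant $-2$, which is not a unit in $K$. Finally, the system you set up is over-determined (equations indexed by pairs of orbits, unknowns by single orbits), and its consistency -- which you attribute to invertibility of ``the corresponding cell form'' -- is precisely what needs proof; establishing it is essentially equivalent to the existence of the idempotent you are trying to construct.

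The paper's proof is engineered to avoid exactly these problems. It imposes the necessary condition against the \emph{single} element $\overline{u}=\overline{u_1}\,\overline{u_3}\cdots\overline{u_{n-\ell-1}}$ (not an orbit sum), chooses the representatives $A_\fkt$ to have arcs joining $2j+1$ and $2j+2$ so that $\overline{u}A_\fkt=A_\fkt$ with coefficient exactly $1$, and proves that every other product $\overline{u}A$ landing on $A_\fkt$ contributes a strictly negative power of $\dd$. Comparing coefficients of the $m$ chosen representatives then yields a \emph{square} $m\times m$ system whose matrix is congruent to the identity modulo $K\dd^{-1}$, hence has unit determinant in $K$, and Cramer's rule keeps the $c_\fkt$ in $K$; sufficiency then follows from the uniqueness of this solution together with the existence of the splitting idempotent over the generic field. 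If you want to salvage your route, you would need to replace the orbit-sum equations by the single-element test and supply the loop-counting proposition that makes the matrix unitriangular modulo $K\dd^{-1}$.
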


This paper is structured as follows: In Section \ref{sec:defns} we set
up the definitions for the rest of the paper and classify the
$\sym{n}$-conjugacy classes of elements of $B_n$. In Section
\ref{sec:idemp} we use this to construct the splitting idempotents
related to certain ideals in $B_n$. Section \ref{sec:prim} contains
some background representation theory needed to obtain some of the
primitive central idempotents of $B_n$, and Section \ref{sec:eg}
provides several applications of the theory. Finally, there are two
supplementary sections: one with the splitting idempotents in $B_6$ to
show that the method we obtain does give results that would previously
have been inaccessible, and another comparing the complexity of our
method to a previously known procedure from \cite{leducram} to justify
its use.
\section{Preliminaries}
\label{sec:defns}

\subsection{Young tableaux}
\label{sec:partitions}

Given a partition $\lambda=(\lambda_1,\dots,\lambda_m)$
of $n\in\N$  (i.e. $\sum_i \lambda_i =n$ and $\lambda_1 \ge \lambda_2
\ge \cdots \ge \lambda_m$), 
written $\lambda\vdash n$, also $|\lambda|=n$, we define the
\emph{Young diagram} $[\lambda]$ to be the set
\[[\lambda]=\{(i,j)\in\N^2~|~1\leq i\leq  m,~1\leq j\leq \lambda_i\}.\]
We depict these graphically in the plane by a configuration of boxes
in the ``English convention'', for instance the partition $(5,3,3,1)$
of $12$ has Young diagram
\[\yng(5,3,3,1).\]
We will confuse partitions and their Young diagrams in the usual way.

A \emph{Young tableau} of shape $\lambda$ is a function
\[\fkt:[\lambda]\longrightarrow T,\]
where $T$ is a non-empty set. We can equivalently think of $\fkt$ as a filling of the boxes of $[\lambda]$ by elements of $T$. We will also confuse these two definitions.

\subsection{The Brauer algebra}
\label{sec:brauer}
If $T$ is a finite set of size $2m$ for some $m\in\N$, then write $\J(T)$ for the set of pair partitions of $T$, that is the set
\[\J(T)=\big\{a_1\sqcup\dots\sqcup a_m~|~a_i\subset T,~|a_i|=2\text{ for all }i\big\}\] 
For $n\in\N$ let $\uu{n}=\{1,2,\dots,n\}$, $\uu{n'}=\{1',2',\dots,n'\}$, and define the function
\begin{align}
  \op:\uu{n}\cup\uu{n'}&\longrightarrow\uu{n}\cup\uu{n'}\label{eq:op}\\
  x\in\uu{n}&\longmapsto x'\nonumber\\
  x'\in\uu{n'}&\longmapsto x.\nonumber
\end{align}
Fix an indeterminate $\dd$ and let $R$ be a $\Z[\dd]$-algebra, i.e. a commutative ring with a single parameter. The \emph{Brauer algebra} $B_n=B_n(\dd)$ is the $R$-algebra with basis $\J_n=\J(\uu{n}\cup\uu{n'})$. We can represent any element $A$ of $\J_n$ as a graph in the plane, with vertex set $\uu{n}\cup\uu{n'}$ and an edge between vertices $x$ and $y$ if $\{x,y\}\in A$. We will identify all graph depictions of the same element $A$, and typically draw the vertices as two horizontal rows labelled by $\uu{n}$ and $\uu{n'}$ as in the following example. Note then that $x$ and $\op(x)$ are vertically opposite one another.
\begin{eg}
   Let $A=\big\{\{1,4\},\{2,4'\},\{3,5\},\{6,3'\},\{1',2'\},\{5',6'\}\big\}\in\J_6$. This has the following graphical depiction:
\[\tikz{
    \foreach \x in {1,...,6}
d    {
      \draw (\x,0) node {$\bullet$};
      \draw (\x,2) node {$\bullet$};
    }
    \draw[thick] (1,2) to[out=-90,in=-90] (4,2);
    \draw[thick] (2,2) to[out=-90,in=90] (4,0);
    \draw[thick] (3,2) to[out=-90,in=-90] (5,2);
    \draw[thick] (6,2) to[out=-90,in=90] (3,0);
    \draw[thick] (1,0) to[out=90,in=90] (2,0);
    \draw[thick] (5,0) to[out=90,in=90] (6,0);
  }.\]
\end{eg}
We wish to distinguish edges that connect nodes on the same side of the diagram or opposite sides. To do this we define the \emph{type function} on a pair partition $A\in\J_n$:
\begin{align}
  \tp:A&\longrightarrow\{N,S,P\}\label{eq:tp}\\
  \{x,y\}&\longmapsto\begin{cases}
    N&\text{if }x,y\in\uu{n},\\
    S&\text{if }x,y\in\uu{n'},\\
    P&\text{otherwise}.
\end{cases}\nonumber
\end{align}
We will refer to these three cases as northern horizontal arcs, southern horizontal arcs and propagating lines respectively. This allows us to define the following subsets of $\J_n$:
\begin{align*}
  \J_n[\ell]&=\big\{A\in\J_n~|~\text{$A$ contains precisely $\ell$ components $a_i$ such that $\tp(a_i)=P$}\big\},\text{ and}\\
  \J_n(\ell)&=\bigcup_{m\leq\ell}\J_n[m].
\end{align*}
In other words $\J_n[\ell]$ can be thought of as the set of diagrams with precisely $\ell$ propagating lines, and $\J_n(\ell)$ as the set of diagrams with at most $\ell$ propagating lines.

Multiplication in $B_n$ is defined by vertical concatenation of diagrams. Given $A,B\in\J_n$ we compute $AB$ by drawing $A$ on top of $B$ so that the southern nodes of $A$ and the northern nodes of $B$ coincide pointwise. This defines a new graph $A\circ B$ on three rows of vertices. Let $v(A,B)$ be the number of connected components of $A\circ B$ involving only vertices in the middle row. By considering the connected components of vertices on the top and bottom rows we obtain a pair partition $\pi(A\circ B)$, and define $AB=\dd^{v(A,B)}\pi(A\circ B)$. Note that this multiplication cannot increase the number of propagating lines in a diagram, and hence the set $\J_n(\ell)$ is an $R$-basis of an ideal $J_n(\ell)\subset B_n$.

The Brauer algebra is a unital algebra with identity element
\[1=1_n=\big\{\{1,1'\},\{2,2'\},\dots,\{n,n'\}\big\},\]
and is generated by elements $u_1,u_2,\dots,u_{n-1}$ and $\sigma_1,\sigma_2,\dots,\sigma_{n-1}$, where
\begin{align*}
  u_i&=\left(1_n\backslash\big\{\{i,i'\},\{i+1,(i+1)'\}\big\}\right)\cup\big\{\{i,i+1\},\{i',(i+1)'\}\big\}\\
%  u_i&=\big\{\{1,1'\},\dots\{i-1,(i-1)'\},\{i,i+1\},\{i',(i+1)'\},\{i+2,(i+2)'\},\dots,\{n,n'\}\big\}\\
  \sigma_i&=\left(1_n\backslash\big\{\{i,i'\},\{i+1,(i+1)'\}\big\}\right)\cup\big\{\{i,(i+1)'\},\{i+1,i'\}\big\}.
%  \sigma_i&=\big\{\{1,1'\},\dots\{i-1,(i-1)'\},\{i,(i+1)'\},\{i+1,i'\},\{i+2,(i+2)'\},\dots,\{n,n'\}\big\}.
\end{align*}
We can depict these elements graphically as follows:
\[u_i=\tikz[baseline=2.5em]{
    \foreach \x in {1,...,6}
    {
      \draw (\x,0) node {$\bullet$};
      \draw (\x,2) node {$\bullet$};
    }
    \draw[thick] (1,2) to[out=-90,in=90] (1,0);
    \draw[thick] (2,2) to[out=-90,in=90] (2,0);
    \draw[thick] (3,2) node[yshift=10pt] {$i$} to[out=-90,in=-90] (4,2) node[yshift=9pt] {$i+1$};
    \draw[thick] (3,0) node[yshift=-10pt] {$i'$} to[out=90,in=90] (4,0) node[yshift=-11pt] {$(i+1)'$};
    \draw[thick] (5,2) to[out=-90,in=90] (5,0);
    \draw[thick] (6,2) to[out=-90,in=90] (6,0);
  }\hspace{1em}\text{and}\hspace{1em}
  \sigma_i=\tikz[baseline=2.5em]{
    \foreach \x in {1,...,6}
    {
      \draw (\x,0) node {$\bullet$};
      \draw (\x,2) node {$\bullet$};
    }
    \draw[thick] (1,2) to[out=-90,in=90] (1,0);
    \draw[thick] (2,2) to[out=-90,in=90] (2,0);
    \draw[thick] (3,2) node[yshift=10pt] {$i$} to[out=-90,in=90] (4,0) node[yshift=-12pt] {$(i+1)'$};
    \draw[thick] (4,2) node[yshift=9pt] {$i+1$} to[out=-90,in=90] (3,0) node[yshift=-11pt] {$i'$};
    \draw[thick] (5,2) to[out=-90,in=90] (5,0);
    \draw[thick] (6,2) to[out=-90,in=90] (6,0);
  }.\]
As mentioned in the introduction, we wish to work over a ring that is amenable to both specialisation of $\dd$ and moving to fields of characteristic $p\geq0$, whilst still allowing us to invert monic polynomials in $\dd$. For our purposes, this ring will be
\[K=\{f/g~|~f,g\in\Z[\dd],\text{ $g$ monic},~\deg(f)\leq\deg(g)\},\]
a subring of $\mathbb{Q}(\dd)$ containing $\Z[\dd^{-1}]$. It is the intersection of the discrete valuation ring $\Z[\dd^{-1}]_{\langle\dd^{-1}\rangle}$ with the localisation of $\Z[\dd]$ at the set of all monic polynomials. The quotient of $K$ by the principal ideal $K\dd^{-1}$ is isomorphic to $\Z$. An element $x\in K$ is a unit in $K$ if and only if $x\equiv\pm1$ mod $K\dd^{-1}$. In order to use this ring, we must substitute the generator $u_i$ by
\begin{equation}\label{eq:ui}
  \overline{u_i}=\frac{1}{\dd}u_i.
\end{equation}
We then view the Brauer algebra as the $K$-algebra generated by the $\sigma_i$ and $\overline{u_i}$. Writing a pair partition $A$ as a product of generators $A=\prod_{j=1}^mA_j$ where $A_j=\sigma_{i_j}$ or $u_{i_j}$ for $1\leq i_j\leq n-1$, we let $\overline{A}=\prod_{j=1}^m\overline{A_j}$, where
\[\overline{A_j}=\begin{cases}
    \sigma_{j_i}&\text{if }A_j=\sigma_{j_i},\\
    \overline{u_{j_i}}&\text{if }A_j=u_{j_i}.
  \end{cases}\]
Then a basis of $B_n$ over $K$ is given by 
\[\overline{\J_n}=\{\overline{A}~|~A\in\J_n\}.\]
We analogously define 
\begin{align*}
  \overline{\J_n}[\ell]&=\{\overline{A}~|~A\in\J_n[\ell]\},\\
  \overline{\J_n}(\ell)&=\{\overline{A}~|~A\in\J_n(\ell)\},\text{ and}\\
  \overline{J_n}(\ell)&=B_n\overline{\J_n}(\ell)B_n.
\end{align*}
Note that since all elements of $\J_n[n]$ are generated by the $\sigma_i$, we have $\overline{\J_n}[n]=\J_n[n]$.
\subsection{Spore function on pair partitions}
\label{sec:spore}
The subalgebra $K\J_n[n]$ of $B_n$ generated by the $\sigma_i$ is
isomorphic to $K\sym{n}$, where permutations are composed
left-to-right. Thus $B_n$ is both a left and a right $K\sym{n}$-module
by restriction. In particular we can conjugate pair partitions by
elements $\sigma\in\sym{n}$, which amounts to relabelling 
the nodes $x$, $x'$ with 
$\sigma x$ and $\sigma x'$. Write
$A^{\sym{n}}$ for the orbit of $A\in\J_n$ under conjugation by
$\sym{n}$. Note that $A\in\J_n[n]$ implies
$A^{\sym{n}}\subset\J_n[n]$ which is in natural bijection with
$\sym{n}$, and there is the usual
observation that conjugacy classes are indexed by integer partitions
of $n$. We also define
\[A_\Sigma=A^{\sym{n}}_\Sigma=\sum_{B\in A^{\sym{n}}}B,\]
the $\sym{n}$-orbit sum of $A$.

The rest of this section is devoted to classifying the $\sym{n}$-conjugacy classes of $\J_n$.

\begin{defn}\label{defn:tn}
Given two Young tableaux $\fks,\fkt$ with entries in $\{N,S,P\}$ and
underlying Young diagram $\lambda=(\lambda_1,\dots,\lambda_m)$ with
$\lambda_m\neq0$, we say $\fks\sim \fkt$ if there exists a permutation
$\sigma\in\sym{m}$ such that the $\sigma i$-th row of $\fkt$ can be
obtained from the $i$-th row of $\fks$ by cycling and/or reversing the
entries. It is clear then that $\sim$ is an equivalence relation. For
$n\in\N$, we let $\T_n$ be the set
  \[\T_n=\big\{\{N,S,P\}^{[\lambda]}~|~\lambda\vdash n\big\}/\sim.\]
Let also $\T_n(\ell)$ be the subset of $\T_n$ containing all tableaux
with at most $\ell$ entries equal to $P$.
\end{defn}

\begin{defn}\label{defn:spore}

  We define the \emph{Spore} function
  \[\Sp:\J_n\longrightarrow\T_n\]
  as follows. For a pair partition $A\in\J_n$ begin by decomposing $A$
  into a disjoint union of non-empty sets
  \[A=A_1\sqcup\dots\sqcup A_m,\]
of maximal possible $m$,
  where for all $a\in A_i,b\in A_j$ $(i\neq j)$, if $x\in a$ then
  $\op(x)\not\in b$, where $\op$ is the function  defined in
  $\eqref{eq:op}$ above. We also relabel so that
  $|A_1|\geq|A_2|\geq\dots\geq|A_m|$. This decomposition defines an
  integer partition $\lambda_A$ of $n$, where $(\lambda_A)_i=|A_i|$.

We now associate a Young tableau $\fks$ of shape $\lambda_A$ to
$A$. For each part $A_i$ of the above decomposition of $A$ we order
the components $a_1,\dots,a_{\lambda_i}$ as follows. We choose $a_1$
arbitrarily, and pick an element $x_1\in a_1$. Now given $a_j$ and
$x_j\in a_j$ we define $a_{j+1}\in A_i$ to be the component containing
$\op(x_j)$ and $x_{j+1}$ to be the element of
$a_{j+1}\backslash\{\op(x_j)\}$. This process ends when we return to
the set $a_1$. Then $A_i$ will have the form
\begin{align*}
  A_i&=\{a_1,a_2,a_3,\dots,a_{(\lambda_A)_i}\}\\
     &=\left\{\{x_1,\op(x_{(\lambda_A)_i})\},\{\op(x_1),x_2\},\{\op(x_2),x_3\},\dots,\{op(x_{(\lambda_A)_i-1}),x_{(\lambda_A)_i}\}\right\}.
\end{align*}
In the $i$-th row of the Young diagram $[\lambda_A]$ we then fill the $j$-th box with the symbol $\tp(a_j)$, where $\tp$ is the type function defined in $\eqref{eq:tp}$ above.
\end{defn}
\begin{prop}
  The function $\Sp$ is well-defined.
\end{prop}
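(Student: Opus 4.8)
The plan is to replace the choice-laden construction in Definition \ref{defn:spore} by an intrinsic combinatorial object, and then to verify that each arbitrary choice made along the way alters the resulting tableau only within its $\sim$-class. The first task is to reinterpret the decomposition $A=A_1\sqcup\dots\sqcup A_m$ graph-theoretically. Consider the graph $G_A$ on vertex set $\uu{n}\cup\uu{n'}$ whose edges are the pairs of $A$ together with the pairs $\{x,\op(x)\}$. Since $A$ is a pair partition, every vertex lies in exactly one pair of $A$, and $\op$ is a fixed-point-free involution, so every vertex meets exactly one $A$-edge and one $\op$-edge; hence $G_A$ is $2$-regular and decomposes uniquely into cycles, each alternating between $A$-edges and $\op$-edges and therefore of even length. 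I would then observe that the parts $A_i$ are exactly the sets of $A$-edges of these cycles: the defining condition (``if $x\in a$ then $\op(x)\notin b$ for $a\in A_i$, $b\in A_j$, $i\neq j$'') says precisely that no $A$-edge in one part is $\op$-adjacent to an $A$-edge in another, so the parts are unions of cycles, and maximality of $m$ forces each part to be a single cycle. As the cycle decomposition of a graph is unique, the multiset $\{|A_1|,\dots,|A_m|\}$ and hence the partition $\lambda_A$ are independent of all choices.

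Next I would analyse the ordering inside a single part $A_i$, which is exactly a directed traversal of the corresponding cycle. With the cycle fixed, the construction depends only on the initial component $a_1$ and the initial element $x_1\in a_1$, the latter fixing the direction of travel. Starting instead from $a_2$ (with $x_2$) simply reruns the same traversal begun one step later, cyclically shifting the sequence $a_1,\dots,a_{(\lambda_A)_i}$ and hence the type row $(\tp(a_1),\dots,\tp(a_{(\lambda_A)_i}))$. Choosing the other element $\op(x_{(\lambda_A)_i})$ of $a_1$ runs the traversal around the cycle in the opposite sense, producing $a_1,a_{(\lambda_A)_i},a_{(\lambda_A)_i-1},\dots,a_2$; since $\tp$ depends only on a component as a set, not on the direction of traversal, the type row is reversed, up to a cyclic shift. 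Thus the type word attached to a fixed cycle is well-defined up to cycling and reversal, which is exactly the freedom built into $\sim$ at the level of a single row.

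Finally I would assemble these facts. Two runs of $\Sp$ with different choices give tableaux of the same shape $\lambda_A$; the bijection between their rows that matches equal cycles yields a permutation $\sigma\in\sym{m}$, which may be taken to permute only rows of equal length and so to preserve the shape, and under this matching corresponding rows agree up to cycling and reversal by the preceding paragraph. This is precisely the assertion that the two tableaux are $\sim$-equivalent, so $\Sp(A)$ is a well-defined element of $\T_n$.

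I expect the main obstacle to lie in the second paragraph: carefully confirming that switching the initial element genuinely reverses the traversal, and hence the type word, while correctly tracking the interleaving of $A$ and $\op$ in the indexing $a_j=\{\op(x_{j-1}),x_j\}$ without off-by-one errors, and checking that re-basepointing together with direction-reversal generates exactly the dihedral symmetry encoded by ``cycling and/or reversing'' rather than some strictly smaller group.
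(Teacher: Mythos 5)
Your proof is correct and follows essentially the same route as the paper's: identify the three choices made in the construction (ordering of the parts $A_i$, initial component $a_1$, initial element $x_1$) and match them respectively to the row permutation, cycling, and reversal allowed by $\sim$. The only difference is that you additionally justify the uniqueness of the decomposition $A=A_1\sqcup\dots\sqcup A_m$ via the cycle decomposition of the $2$-regular graph on $A$-edges and $\op$-edges — a point the paper's proof takes for granted from the maximality of $m$ in Definition \ref{defn:spore} — so your argument is, if anything, slightly more complete.
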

\begin{proof}
  We must show that any pair of tableaux $\fks,\fkt$ constructable from an element $A\in\J_n$ satisfy $\fks\sim \fkt$. In the process described above we make several choices. Firstly, if any of the parts $A_i$ contain the same number of components, we can place the corresponding rows of the Young tableau in any order. However we can obtain any of these tableaux by performing a permutation of the rows, which will give the element $\sigma\in\sym{m}$ from Definition \ref{defn:tn}.

Once we have chosen an order on the parts $A_i$, the next choice is to pick a component $a_1$ and an element $x_1\in a_1$. Choosing a different component for $a_1$ amounts to cycling the sequence of the $a_i$, and choosing a different element $x_1$ reverses the sequence. We therefore see that both tableau represent the same class in $\T_n$.
\end{proof}
\begin{rem}\label{rem:spore}
  The process that defines $\Sp(A)$ does not depend on the actual values of the $x_j\in\uu{n}\cup\uu{n'}$, only the components in which they reside. This is to be expected as we can change values of the $x_j$ by $\sym{n}$-conjugation, and the Spore function is intended to be invariant under this.
\end{rem}
Before we move on to use the Spore function, we provide the following example.
\begin{eg}\label{eg:spore}
  Let $A=\big\{\{1,4\},\{2,4'\},\{3,5\},\{6,3'\},\{1',2'\},\{5',6'\}\big\}\in\J_6$. This decomposes into $A=A_1\sqcup A_2$, where
  \begin{align*}
    A_1&=\big\{\{1,4\},\{2,4'\},\{1',2'\}\big\},\text{ and}\\
    A_2&=\big\{\{3,5\},\{6,3'\},\{5',6'\}\big\}.
  \end{align*}
Starting with the first element of the first pair as written above, we obtain sequences $(N,P,S)$ and $(N,S,P)$ for $A_1$ and $A_2$ respectively. Therefore
\[\Sp(A)=\young(NSP,NPS).\]
\end{eg}
There is an alternative way of constructing the Spore function using the diagram form of the Brauer algebra. Given the diagram of a pair partition $A\in\J_n$, begin by labelling all northern horizontal arcs by $N$, southern horizontal arcs by $S$ and propagating arcs $P$. Then identify all pairs of nodes $i,i'$ for $1\leq i\leq n$. The resulting diagram has $n$ nodes connected by a series of arcs each labelled $N$, $S$ or $P$, such that each node has valency $2$. The connected components of this diagram then partition the set of nodes. These components then define an integer partition $\lambda$ of $n$, where $\lambda_i$ is the number of nodes in the $i$-th largest connected component. For each $i$, we choose a node in the $i$-th largest component and a direction, walk around this component and record the sequence of edge labels we encounter in the $i$-th row of the Young diagram $[\lambda]$. This defines a Young tableaux $\fkt$ with entries in $\{N,S,P\}$. We therefore set $\Sp(A)=\fkt\in\T_n$.
\begin{eg}
  Let $A=\big\{\{1,4\},\{2,4'\},\{3,5\},\{6,3'\},\{1',2'\},\{5',6'\}\big\}\in\J_6$ as before. We draw the diagram and label the edges $N$, $S$ or $P$ below.
\[\tikz{
    \foreach \x in {1,...,6}{
      \fill[black] (\x,0) circle (2pt);
      \fill[black] (\x,2) circle (2pt);}
    \draw[thick] (1,2) to[out=-90,in=-90] node[pos=0.25,above] {$N$} (4,2);
    \draw[thick] (2,2) to[out=-90,in=90] node[pos=0.5,below] {$P$} (4,0);
    \draw[thick] (3,2) to[out=-90,in=-90] node[pos=0.75,above] {$N$} (5,2);
    \draw[thick] (6,2) to[out=-90,in=90] node[pos=0.5,below] {$P$} (3,0);
    \draw[thick] (1,0) to[out=90,in=90] node[pos=0.5,above] {$S$} (2,0);
    \draw[thick] (5,0) to[out=90,in=90] node[pos=0.5,above] {$S$} (6,0);
  }\]
After identifying opposite pairs of nodes we have the following diagram.
\[\tikz{
    \foreach \x in {1,...,6}{\fill[black] (\x,0) circle (2pt);}
    \draw[thick] (1,0) to[out=-90,in=-90] node[pos=0.5,above] {$N$} (4,0);
    \draw[thick] (2,0) to[out=90,in=90] node[pos=0.5,above] {$P$} (4,0);
    \draw[thick] (3,0) to[out=-90,in=-90] node[pos=0.6,below] {$N$} (5,0);
    \draw[thick] (6,0) to[out=90,in=90] node[pos=0.5,below] {$P$} (3,0);
    \draw[thick] (1,0) to[out=90,in=90] node[pos=0.5,above] {$S$} (2,0);
    \draw[thick] (5,0) to[out=-90,in=-90] node[pos=0.5,below] {$S$} (6,0);
  }\]
We see that we have two connected components, each containing three nodes. Starting with the leftmost node in each part and walking counter-clockwise around we record the same tableaux as in Example \ref{eg:spore}.
  \begin{align*}
    A_1&=\big\{\{1,4\},\{2,4'\},\{1',2'\}\big\},\text{ and}\\
    A_2&=\big\{\{3,5\},\{6,3'\},\{5',6'\}\big\}.
  \end{align*}
Starting with the first element of the first pair as written above, we obtain sequences $(N,P,S)$ and $(N,S,P)$ for $A_1$ and $A_2$ respectively. Therefore
\[\Sp(A)=\young(NSP,NPS).\]
\end{eg}
\begin{prop}\label{prop:spore}
  For all $A,B\in\J_n$, $A^{\sym{n}}=B^{\sym{n}}$ if and only if $\Sp(A)=\Sp(B)$.
\end{prop}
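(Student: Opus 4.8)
The plan is to prove the two implications separately, treating the forward direction as essentially the invariance already flagged in Remark \ref{rem:spore}, and reserving the real work for reconstructing a conjugating permutation in the reverse direction.

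For the forward implication, suppose $B=\sigma\cdot A$ for some $\sigma\in\sym{n}$, where $\sigma$ acts by relabelling each node $x\mapsto\sigma x$ and $x'\mapsto(\sigma x)'$. I would observe that this relabelling commutes with $\op$ and preserves the type function $\tp$, since $\sigma$ maps $\uu{n}$ to $\uu{n}$ and $\uu{n'}$ to $\uu{n'}$. Consequently the decomposition $A=A_1\sqcup\dots\sqcup A_m$ into $\op$-closed parts, the walk through each part, and the sequence of types recorded along it are all carried over verbatim to $B$. Hence $\Sp(A)=\Sp(B)$ in $\T_n$; this is precisely the content of Remark \ref{rem:spore}.

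For the reverse implication, the idea is that the walk defining $\Sp$ can be run in reverse to reconstruct a pair partition from its tableau up to conjugacy. The structural observation I would record first is that the walk through a single part $A_i$ with $(\lambda_A)_i=k$ traverses a cyclic sequence of $2k$ nodes $x_1,\op(x_1),x_2,\op(x_2),\dots,x_k,\op(x_k)$, in which the nodes in positions $2j-1$ and $2j$ form the $\op$-pair $\{x_j,\op(x_j)\}$, while the components $a_1,\dots,a_k$ of $A_i$ occupy the complementary pairs of consecutive positions (wrapping around the cycle). Thus the recorded row of types, together with this alternating structure, determines $A_i$ once the underlying nodes are pinned down. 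Now if $\Sp(A)=\Sp(B)$ then by definition of $\sim$ the shapes agree, so $\lambda_A=\lambda_B$, and after a permutation of rows and a choice of cycling/reversal within each row the tableaux coincide. I would use exactly this data to match each part $A_i$ with a part $B_{\tau(i)}$ and to fix walks through the two parts whose type sequences are literally equal. Aligning the two walks position by position defines a bijection $\psi$ from the nodes of $A$ to the nodes of $B$; because the $\op$-pairs sit in the same positions of both walks, $\psi$ commutes with $\op$, and so descends to a genuine permutation $\sigma\in\sym{n}$ via $x\mapsto\sigma x$, $x'\mapsto(\sigma x)'$. Since $\psi$ sends each component $a_j=\{\op(x_{j-1}),x_j\}$ of $A$ to the correspondingly indexed component of $B$, conjugation by $\sigma$ carries $A$ to $B$, whence $A^{\sym{n}}=B^{\sym{n}}$.

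The main obstacle I anticipate is this reverse direction, and within it the verification that $\psi$ is well defined as an element of $\sym{n}$ rather than merely a bijection of the $2n$ nodes. One must check that the freedom in $\sim$ --- permuting rows, cycling, and \emph{reversing} each row --- is matched exactly by the freedom in choosing the starting component and the direction of each walk, and that every such walk retains the alternating $\op$-pair/component structure so that the position-by-position alignment respects $\op$. Once this bookkeeping is in place, the identification of components, and hence the conclusion $\sigma\cdot A=B$, is routine.
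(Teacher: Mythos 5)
Your proposal is correct and follows essentially the same route as the paper: the forward direction is the invariance of the decomposition and type data under relabelling, and the reverse direction builds the conjugating permutation by aligning the walk sequences $x_i$ and $y_i$ through matched components (the paper phrases this as setting $\sigma x_i=y_i$ after reducing to the case of a single part). Your version is merely more explicit about checking that the resulting bijection of the $2n$ nodes respects $\op$ and the two sides $\uu{n}$, $\uu{n'}$ --- a point the paper's proof passes over with the phrase ``ignoring primes'' --- but the substance is identical.
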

\begin{proof}
  The effect of conjugation by an element of the symmetric group on a diagram $A$ is to apply the same permutation to the the set $\uu{n}$ and $\uu{n'}$. Therefore when we decompose $A=A_1\sqcup\dots\sqcup A_m$, neither the size of the $A_i$ nor the type of the component parts is affected. We therefore have that for all $A\in\J_n$ and $\sigma\in\sym{n}$,
  \[\Sp(A)=\Sp(\sigma A\sigma^{-1}).\]
It follows that $A^{\sym{n}}=B^{\sym{n}}$ implies $\Sp(A)=\Sp(B)$.

Now assume that $\Sp(A)=\Sp(B)$. We saw in Remark \ref{rem:spore} that the labels of the $x_j$ do not matter, so we may assume that the first components $A_1$ and $B_1$ contains elements $x,x'$ for $1\leq x\leq(\lambda_A)_1$, the second components contain $x,x'$ for $(\lambda_A)_1+1\leq x\leq(\lambda_A)_2$ and so on. Since the two diagrams then are formed of disjoint components of corresponding sizes, we may assume that there is only one part in the decomposition $A=A_1$ (hence also $B=B_1$). Then it must be possible to write $A=\{a_1,\dots,a_n\}$ and $B=\{b_1,\dots,b_n\}$ such that $\tp(a_i)=\tp(b_i)$ for all $i$. We also have sequences of distinct elements $x_i\in a_i$ and $y_i\in b_i$ such that (ignoring primes) all values $1,\dots,n$ appear in each sequence. We then construct a permutation $\sigma\in\sym{n}$ by setting $\sigma x_i=y_i$ for all $i$. Hence we have $A=\sigma B\sigma^{-1}$, and therefore $A^{\sym{n}}=B^{\sym{n}}$.
\end{proof}
\begin{prop}
  For each $\ell$, the image of $\J_n[\ell]$ under $\Sp$ is the set of equivalence classes in $\T_n$ whose representative tableaux have the following properties:
  \begin{itemize}
  \item there are the same number of $N$ and $S$ in each row;
  \item ignoring the $P$, the $N$ and $S$ alternate across each row;
  \item $P$ appears $\ell$ times across the whole tableau.
  \end{itemize}
\end{prop}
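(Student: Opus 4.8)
The plan is to translate each row of $\Sp(A)$ into a cyclic binary word and read the three conditions off the elementary combinatorics of such words. Fix a part $A_i$ of the decomposition $A=A_1\sqcup\dots\sqcup A_m$ and let $x_1,\dots,x_{(\lambda_A)_i}$ be the elements produced in Definition \ref{defn:spore}, so that $a_j=\{\op(x_{j-1}),x_j\}$ with indices read cyclically. Put $\epsilon_j=0$ if $x_j\in\uu{n}$ and $\epsilon_j=1$ if $x_j\in\uu{n'}$. Since $\op$ interchanges the two rows, inspection of the definition of $\tp$ yields the transition dictionary
\begin{align*}
  \tp(a_j)=N&\iff(\epsilon_{j-1},\epsilon_j)=(1,0),\\
  \tp(a_j)=S&\iff(\epsilon_{j-1},\epsilon_j)=(0,1),\\
  \tp(a_j)=P&\iff\epsilon_{j-1}=\epsilon_j.
\end{align*}
Thus the $i$-th row of $\Sp(A)$ records the successive transitions of the cyclic word $\epsilon_1\cdots\epsilon_{(\lambda_A)_i}$, with $N$ at its descents, $S$ at its ascents, and $P$ where the value is repeated.

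With this dictionary the inclusion of the image of $\J_n[\ell]$ in the stated set is immediate. Each box of $\Sp(A)$ is the $\tp$-value of a distinct pair of $A$, so the number of $P$'s equals the number of propagating pairs, which is $\ell$; this is the third property. For a fixed row, a cyclic binary word has equally many ascents and descents, giving the same number of $N$ and $S$ (first property); and as the value is constant between consecutive changes, the value-changing transitions alternate ascent and descent, so deleting the $P$'s leaves an alternating string of $N$ and $S$ (second property). All three conditions are preserved by cycling or reversing a row and by permuting rows, so they are well defined on $\T_n$ and may be checked on any representative.

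For surjectivity I would invert this construction. Given $\fkt$ of shape $\lambda$ satisfying the three conditions, I build a pair partition one row at a time. For the $i$-th row I first reconstruct a cyclic binary word realising its type sequence as a transition pattern: starting from an arbitrary value, each $P$ repeats the current value while each $N$ (resp.\ $S$) imposes a descent (resp.\ ascent); the alternation condition makes these imposed values consistent across the intervening $P$'s, and the equality of the numbers of $N$ and $S$ makes the word close up cyclically. I then assign to this row a block of $\lambda_i$ distinct values of $\uu{n}$, let each $x_j$ be the primed or unprimed version dictated by the word, and set $a_j=\{\op(x_{j-1}),x_j\}$. One verifies that these pairs use every element of $\uu{n}\cup\uu{n'}$ exactly once and that walking the resulting component returns precisely the given row, whence by Proposition \ref{prop:spore} the pair partition $A$ so obtained has $\Sp(A)=\fkt$; since $P$ occurs $\ell$ times it has $\ell$ propagating lines, i.e.\ $A\in\J_n[\ell]$. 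The main obstacle is exactly this reverse step: one must check that the reconstructed word is well defined (from alternation) and cyclically closed (from the balanced $N/S$ count) and that the labelling yields a genuine pair partition, whereas the forward direction is only a routine reading of the transition dictionary.
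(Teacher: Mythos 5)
Your proof is correct, and it is in fact more complete than the one in the paper. For the forward inclusion the two arguments are the same combinatorics in different clothing: the paper reasons directly on the diagram (every northern arc in a component forces a matching southern arc, and two consecutive northern arcs would require a node of valency two), whereas you encode each component as a cyclic binary word $\epsilon_1\cdots\epsilon_k$ and read off $N$, $S$, $P$ as descents, ascents and repetitions; your transition dictionary is easily checked against Definition \ref{defn:spore} and makes the "equal counts" and "alternation" claims into standard facts about cyclic words. Where you genuinely go beyond the paper is surjectivity: the proposition asserts that the image of $\J_n[\ell]$ \emph{equals} the set of classes satisfying the three conditions, but the paper's proof only verifies that the conditions are necessary and says nothing about realising an arbitrary admissible tableau as $\Sp(A)$. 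Your reverse construction --- rebuilding the cyclic word from the type sequence (well defined by alternation, cyclically closed by the balanced $N/S$ count) and then labelling each component with a fresh block of $\uu{n}$ --- supplies exactly the missing half, and your appeal to Proposition \ref{prop:spore} to conclude $\Sp(A)=\fkt$ independently of the choices made is the right way to finish. The one point worth writing out a little more carefully is the degenerate row consisting entirely of $P$'s, where the word is not pinned down by any $N$ or $S$ and you must simply take a constant word; this causes no difficulty but should be mentioned for completeness.
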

\begin{proof}
  For the first property, note that for each horizontal arc on the top of each component of the diagram we must also have a horizontal arc on the bottom.

  The second property follows from the fact that each node in the original diagram is connected to precisely one edge, so we cannot have successive arcs at the top since this will require a node of valency two between them (and similarly for the bottom).

  The last property is by definition of $\J_n[\ell]$, as this states that the original diagram has precisely $\ell$ propagating lines.
\end{proof}
\begin{rem*}
  Note that the tableaux in the image of $\J_n[n]$ have all entries equal to $P$, and are therefore in bijection with the set of partitions $\lambda$ of $n$. This is to be expected, as $\J_n[n]$ is isomorphic as a group to $\sym{n}$.
\end{rem*}
\section{Construction of the splitting idempotent}
\label{sec:idemp}
As outlined in the introduction, we will follow the approach of \cite{martinwoodcock}. This relies on the following lemma:
\begin{lem}[{\cite[Section 1]{martinwoodcock}}]\label{lem:splitting}
 Let $J\subset \Lambda$ be an ideal of a unital algebra $\Lambda$, then the short exact sequence of $\Lambda$-bimodules
\[\ses{J}{\Lambda}{\Lambda/J}\]
splits if and only if there is an element $\phi_J\in \Lambda$ with the following properties:
\begin{enumerate}
\item $\phi_J\equiv1_\Lambda$ mod $J$;
\item $\phi_JJ=J\phi_J=0$.
\end{enumerate}
If $\phi_J$ exists then it is the unique idempotent with these properties, and moreover $\phi_J\in Z(\Lambda)$, the centre of $\Lambda$. 
\end{lem}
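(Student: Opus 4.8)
The plan is to separate the purely algebraic content from the statement about splitting. I would first show that conditions (i) and (ii) alone force $\phi_J$ to be a central idempotent and to be unique, and only afterwards establish the equivalence with splitting; this ordering is convenient because the ``moreover'' clauses then drop out of the same computations used for the two implications.

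First I would record the elementary consequences of (i) and (ii). Condition (i) lets me write $\phi_J=1_\Lambda-j_0$ with $j_0\in J$. Idempotency is then immediate: $\phi_J^2=\phi_J(1_\Lambda-j_0)=\phi_J-\phi_J j_0=\phi_J$, since $\phi_J j_0\in\phi_J J=0$ by (ii). For centrality --- the one genuinely non-obvious step, and the main obstacle --- I would use the sandwich element $\phi_J a\phi_J$ for arbitrary $a\in\Lambda$. On one side, $\phi_J a\phi_J=a\phi_J-(1_\Lambda-\phi_J)a\phi_J$, and $(1_\Lambda-\phi_J)a\in J$ because $1_\Lambda-\phi_J\in J$ and $J$ is a right ideal, so the correction term lies in $J\phi_J=0$ and $\phi_J a\phi_J=a\phi_J$. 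Symmetrically, writing $\phi_J a\phi_J=\phi_J a-\phi_J a(1_\Lambda-\phi_J)$ and using $a(1_\Lambda-\phi_J)\in J$ together with $\phi_J J=0$ gives $\phi_J a\phi_J=\phi_J a$. Hence $a\phi_J=\phi_J a$ for all $a$, i.e. $\phi_J\in Z(\Lambda)$. Uniqueness follows by the standard trick: if $\phi,\psi$ both satisfy (i) and (ii), writing $\psi=1_\Lambda-k$ with $k\in J$ gives $\phi\psi=\phi-\phi k=\phi$, and symmetrically $\phi\psi=\psi$, so $\phi=\psi$.

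For the reverse implication (existence of $\phi_J$ implies splitting) I would exploit centrality and idempotency to decompose $\Lambda=\Lambda\phi_J\oplus\Lambda(1_\Lambda-\phi_J)$ as a direct sum of two-sided ideals, hence of sub-bimodules. I would then identify the second summand with $J$: the inclusion $\Lambda(1_\Lambda-\phi_J)\subseteq J$ is clear since $1_\Lambda-\phi_J\in J$ and $J$ is an ideal, while conversely any $j\in J$ satisfies $j=j\phi_J+j(1_\Lambda-\phi_J)=j(1_\Lambda-\phi_J)\in\Lambda(1_\Lambda-\phi_J)$ because $j\phi_J\in J\phi_J=0$. Thus $\Lambda\phi_J$ is a bimodule complement to $J$, which is exactly a splitting; concretely I would check that $\bar a\mapsto\phi_J a$ is a well-defined bimodule section of the quotient map, where well-definedness uses $\phi_J J=0$, the bimodule property uses centrality, and it is a section because $\phi_J a\equiv a$ mod $J$.

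For the forward implication I would take a bimodule splitting in the form of an idempotent bimodule endomorphism $p\colon\Lambda\to\Lambda$ with $\ker p=J$ (obtained as $s\circ\pi$ from a section $s$ of the quotient map $\pi$), and set $\phi_J=p(1_\Lambda)$. Condition (i) holds because $1_\Lambda-\phi_J\in\ker p=J$, and condition (ii) holds because, for $j\in J$, right-linearity of $p$ gives $\phi_J j=p(1_\Lambda)j=p(j)=0$ and left-linearity gives $j\phi_J=p(j)=0$. Everything here beyond the centrality computation in the second step is a direct manipulation of the two defining relations together with the fact that $J$ is a two-sided ideal.
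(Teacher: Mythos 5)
Your proof is correct and complete: the centrality computation via $\phi_J a\phi_J$, the uniqueness argument, and both directions of the equivalence (complementary ideal $\Lambda\phi_J$ for sufficiency, $\phi_J=p(1_\Lambda)$ with $p=s\circ\pi$ for necessity) all check out. The paper itself gives no proof of Lemma \ref{lem:splitting}, citing it to Martin and Woodcock, but your argument is exactly the standard one that reference contains, so there is nothing to flag.
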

 
For $\Lambda'\subset\Lambda$ a subalgebra (or indeed any subset), define $Z_{\Lambda'}(\Lambda)$ as the set of elements of $\Lambda$ that commute with $\Lambda'$. Obviously $Z(\Lambda)\subset Z_{\Lambda'}(\Lambda)$. Thus we can start to search for elements of $Z(\Lambda)$ by looking for elements of $Z_{\Lambda'}(\Lambda)$.

We will then examine $Z_{K\sym{n}}(B_n)$, where $K\sym{n}$ is the subalgebra of $B_n$ with basis $\J_n[n]$. We are therefore interested in elements of $\overline{\J_n}$ that are invariant under conjugation by all elements of $\sym{n}$. Consider an element $x\in Z_{K\sym{n}}(B_n)$ of the form
\begin{align*}
  x&=\sum_{A\in\overline{\J_n}}c_AA\hspace{2em}(c_A\in K)\\
   &=\sigma x\sigma^{-1}\\
   &=\sum_{A\in\overline{\J_n}}c_A\sigma A\sigma^{-1}\\
   &=\sum_{A\in\overline{\J_n}}c_{\sigma^{-1}A\sigma}A,
\end{align*}
where we have used the fact that conjugation by $\sigma\in\sym{n}$ is a permutation on $\overline{\J_n}$. Thus $x\in Z_{K\sym{n}}(B_n)$ implies $c_A=c_{\sigma A\sigma^{-1}}$ for all $\sigma$. Evidently for any $A$,
\[\sum_{\sigma\in\sym{n}}\sigma A\sigma^{-1}\in Z_{K\sym{n}}(B_n).\]
In characteristic zero, all possible multiplicities in this sum are units, so $Z_{\sym{n}}(B_n)$ has a basis of elements of this form. However we wish to find a basis valid in arbitrary characteristic.

\begin{lem}\label{lem:basis}
  For each $\fkt\in\Im(\Sp)\subset\T_n$, let $A_\fkt\in\J_n$ be any pair partition such that $\Sp(A_\fkt)=\fkt$. Writing $D_\fkt=(A_\fkt)_\Sigma$, the set
  \[\{D_\fkt~|~\fkt\in\Im(\Sp)\}\]
is a basis of $Z_{\sym{n}}(B_n)$.
\end{lem}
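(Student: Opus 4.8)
The plan is to identify $Z_{\sym{n}}(B_n)$ with the space of elements of $B_n$ fixed by $\sym{n}$-conjugation, to recognise such an element as a $K$-linear combination of orbit sums, and then to read off spanning and independence from the fact that distinct $\sym{n}$-orbits are disjoint subsets of the basis $\overline{\J_n}$. Almost all the structural content has already been assembled before the statement; the proof is mainly a matter of organising it and invoking Proposition~\ref{prop:spore}.

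First I would note that $K\sym{n}=K\J_n[n]$ is spanned by the permutation diagrams, each of which is invertible in $B_n$. Hence $x\in Z_{\sym{n}}(B_n)$ if and only if $\sigma x\sigma^{-1}=x$ for every $\sigma\in\sym{n}$. Next I would check that conjugation by $\sigma$ permutes the basis $\overline{\J_n}$: conjugation relabels the nodes without changing the value of $\tp$ on any arc, so it preserves the number of northern horizontal arcs and therefore commutes with the bar operation, sending $\overline{A}$ to $\overline{\sigma A\sigma^{-1}}$. In particular, all diagrams in a single orbit $A^{\sym{n}}$ share the same number of horizontal arcs, so the bar scales the whole orbit by one common factor.

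With this in place, I would write $x=\sum_{A\in\J_n}c_A\overline{A}$ and compare coefficients in $\sigma x\sigma^{-1}=x$, exactly as in the computation preceding the lemma, to conclude that $x\in Z_{\sym{n}}(B_n)$ if and only if $c_A=c_{\sigma A\sigma^{-1}}$ for all $\sigma$, i.e. the coefficient function is constant on each orbit. Grouping the sum by orbits then presents $x$ as a $K$-linear combination of the orbit sums $\sum_{\overline{B}\in\overline{A^{\sym{n}}}}\overline{B}$. By Proposition~\ref{prop:spore} the orbits are in bijection with $\Im(\Sp)$ via $A\mapsto\Sp(A)$, and the orbit sum attached to $\Sp^{-1}(\fkt)$ is precisely $D_\fkt$; the same proposition shows that $D_\fkt$ is independent of the chosen representative $A_\fkt$. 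This gives spanning.

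For linear independence I would observe that distinct orbits are disjoint subsets of $\overline{\J_n}$, so the various $D_\fkt$ are sums over pairwise disjoint sets of basis vectors, and hence any $K$-linear relation among them forces every coefficient to vanish. I expect the only delicate point to be the interaction with the bar, namely confirming that each $D_\fkt$ really is supported on the $K$-basis $\overline{\J_n}$ so that the disjoint-support argument is valid over $K$; this is exactly what the observation above about the common horizontal-arc count provides. Once conjugation is seen to permute the basis and Proposition~\ref{prop:spore} is invoked, the result is immediate.
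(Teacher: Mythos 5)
Your argument is correct and follows essentially the same route as the paper's (much terser) proof: conjugation-invariance forces the coefficient function to be constant on $\sym{n}$-orbits, so the orbit sums span, they are indexed by $\Im(\Sp)$ via Proposition~\ref{prop:spore}, and pairwise disjoint supports in the diagram basis give linear independence. Your extra care about the interaction of orbit sums with the bar normalisation (all diagrams in an orbit sharing the same horizontal-arc count, hence the same power of $\dd$) is a detail the paper leaves implicit, but it does not change the substance of the argument.
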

\begin{proof}
  It is clear that the elements $A_\Sigma$ $(A\in\J_n)$ span this space, and from Proposition \ref{prop:spore} we see that $A_\Sigma=B_\Sigma$ if and only if $\Sp(A)=\Sp(B)$. The result follows.
\end{proof}

Recall from Section \ref{sec:brauer} that $\overline{J_n}(\ell)$ is the ideal of $B_n$ with basis $\overline{\J_n}(\ell)$, and for $\ell<n$ we write $\phi_n(\ell)$ for the corresponding splitting idempotent in the sense of Lemma \ref{lem:splitting}. We will see below that this idempotent exists for our chosen ring $K$. Define $X_n(\ell)$ by $\phi_n(\ell)=1+X_n(\ell)$. Since $X_n(\ell)$ is central, and hence in $Z_{\sym{n}}(B_n)$, we have
\[X_n(\ell)=\sum_{\fkt\in\T_n(\ell)}c_\fkt D_\fkt\]
where the scalars $c_\fkt$ are to be determined. By Lemma \ref{lem:splitting}(ii) a necessary condition is given by $dX_n(\ell)=-d$ for $d\in\overline{J_n}(\ell)$. Thus in particular for $\overline{u}=\overline{u_1}\,\overline{u_3}\,\overline{u_5}\dots \overline{u_{n-\ell-1}}$ (where the $\overline{u_i}$ are as in \eqref{eq:ui}) a necessary condition is 
\begin{equation}
\overline{u}X_n(\ell)=-\overline{u}.\label{eq:necessary}
\end{equation}
We will use this equation to obtain several linear equations in the $c_\fkt$, show that these are linearly independent and hence solve to obtain the values of $c_\fkt$.

 We may assume that $A_\fkt$ has an arc between nodes $2j+1$ and $2j+2$ for $j=0,1,\dots,\frac{1}{2}(n-\ell-2)$. Then $\overline{u}A_\fkt=A_\fkt$ for all $\fkt\in\T_n(\ell)$. Moreover the following proposition shows that this relation is uniquely satisfied by the action of $u$ on $A_\fkt$.
\begin{prop}
  Suppose $A\neq A_\fkt$ satisfies $\overline{u}A=\dd^rA_\fkt$ for some $r\in\Z$. Then $r<0$.
\end{prop}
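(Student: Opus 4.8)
The plan is to translate the statement into a count of the closed loops formed in the diagram product $u\circ A$. Write $m=\frac{1}{2}(n-\ell)$ for the number of factors in $\overline{u}=\overline{u_1}\,\overline{u_3}\cdots\overline{u_{n-\ell-1}}$. Since the supports $\{2j+1,2j+2\}$ of these generators are disjoint, no loops are produced in forming their product, so as an element of $B_n$ we have $\overline{u}=\dd^{-m}u$, where $u\in\J_n$ is the single diagram with northern arcs $\{2j+1,2j+2\}$, southern arcs $\{(2j+1)',(2j+2)'\}$ for $0\leq j\leq m-1$, and $\ell$ propagating lines. For any diagram $A\in\J_n$ the multiplication rule then gives $\overline{u}A=\dd^{\,v(u,A)-m}\,\pi(u\circ A)$, so the hypothesis $\overline{u}A=\dd^{r}A_\fkt$ forces $\pi(u\circ A)=A_\fkt$ and $r=v(u,A)-m$. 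It therefore suffices to prove the combinatorial claim that if $A\neq A_\fkt$ and $\pi(u\circ A)=A_\fkt$ then $v(u,A)<m$.

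First I would bound $v(u,A)$ from above. A connected component of $u\circ A$ confined to the middle row is a cycle alternating between edges of $u$ that join two middle nodes and edges of $A$ that join two middle nodes; the former are exactly the southern arcs of $u$ and the latter exactly the northern arcs of $A$. Each such loop consumes at least one of the $m$ southern arcs of $u$, whence $v(u,A)\leq m$, with equality precisely when every southern arc of $u$ closes into a two-cycle with a northern arc of $A$ on the same pair of middle nodes. As the southern arcs of $u$ occupy the pairs $\{(2j+1)',(2j+2)'\}$, equality holds if and only if $A$ carries a northern arc on each standard pair $\{2j+1,2j+2\}$, $0\leq j\leq m-1$.

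Next I would show that this extremal configuration forces $A$ to be fixed by $\overline{u}$. If $A$ has northern arcs on all the standard pairs, then its top nodes $1,\dots,n-\ell$ are entirely taken up by these arcs, while the propagating lines of $u$ carry the remaining top structure of $A$ (on the nodes $n-\ell+1,\dots,n$) straight through, and the bottom row of $A$ is untouched. Comparing the top and bottom rows then yields $\pi(u\circ A)=A$, exactly as in the computation $\overline{u}A_\fkt=A_\fkt$ recorded before the statement. Hence $v(u,A)=m$ together with $\pi(u\circ A)=A_\fkt$ would give $A=\pi(u\circ A)=A_\fkt$, contradicting $A\neq A_\fkt$; so $v(u,A)<m$ and $r=v(u,A)-m<0$, as required.

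The loop count and the coefficient bookkeeping are routine, and I expect the real work to be in the extremal analysis: verifying that the bound $v(u,A)\leq m$ is attained only through short two-cycles, so that no longer middle cycle can economise on the southern arcs of $u$, and then confirming that maximality genuinely pins down the northern arcs of $A$ and makes $u$ act as the identity on it.
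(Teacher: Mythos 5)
Your proposal is correct and follows essentially the same route as the paper: it identifies $r$ with the loop count $v(u,A)-m$, shows the maximum $r=0$ is attained only when every southern arc of $\overline{u}$ closes into a two-cycle with a northern arc of $A$ on the same standard pair, and concludes that $\overline{u}$ then fixes $A$, forcing $A=A_\fkt$. You simply spell out the loop-counting bookkeeping (why each middle-row cycle consumes a southern arc of $u$, and why equality forces two-cycles) that the paper's terser proof leaves implicit.
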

\begin{proof}
  Clearly $r=0$ is the maximum possible power of $\dd$ since we are working in the ring $K$. So we prove that if this maximum is attained, then $A=A_\fkt$. 

Firstly, it is clear that if $r=0$, then we must cancel each factor $\frac{1}{\dd}$ from each of the $\overline{u_i}$ constituting $\overline{u}$ by forming closed loops. Thus nodes $2j+1$ and $2j+2$ must be joined for $j=0,1,\dots,\frac{1}{2}(n-\ell-2)$. Next, the action of $\overline{u}$ cannot change the arrangement of any southern arcs, and it acts as the identity on the remaining $\ell$ propagating or northern arcs. Clearly this implies that if $\overline{u}A=\dd^rA_\fkt$ with $r$ maximal, then $A=A_\fkt$.
\end{proof}
Writing $\T_n(\ell)=\{\fkt_1,\dots,\fkt_m\}$ with $\Sp(u)=\fkt_1$ we have a system of equations
\begin{equation}
\left(
\begin{array}{cccc}
  p_1^{(1)}(\dd)&p_2^{(1)}(\dd)&\cdots&p_{m}^{(1)}(\dd)\\
  p_1^{(2)}(\dd)&p_2^{(2)}(\dd)&\cdots&p_{m}^{(2)}(\dd)\\
  \vdots&\vdots&\ddots&\vdots\\
  p_1^{(m)}(\dd)&p_2^{(m)}(\dd)&\cdots&p_{m}^{(m)}(\dd)
\end{array}
\right)
\left(
\begin{array}{c}
  c_{\fkt_1}\\
  c_{\fkt_2}\\
  \vdots\\
  c_{\fkt_m}
\end{array}
\right)=
\left(
\begin{array}{c}
  -\dd^{-\frac{1}{2}(n-\ell)}\\
  0\\
  \vdots\\
  0
\end{array}
\right)\label{eq:matrix}\end{equation}
where the $p_j^{(k)}$ are elements of $K$, $p_j^{(j)}\equiv1$ mod $K\dd^{-1}$, and $p_j^{(k)}\in K\dd^{-1}$ for $k\neq j$.
Therefore the determinant of this matrix is also an element of $K$ with leading term $1$, which is generically non-zero. In order to invert this matrix, it may be true that we have to work over the field of rational polynomials in $\dd$. However the following proposition shows that this is not the case.
\begin{prop}
  The coefficients $c_{\fkt_i}$ all lie in $K$.
\end{prop}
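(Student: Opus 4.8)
The plan is to read off the solution of the linear system \eqref{eq:matrix} by Cramer's rule and to verify that the resulting expressions carry no genuine denominators once we work in $K$. Write $M=(p_j^{(k)}(\dd))$ for the coefficient matrix and $b$ for the right-hand column $(-\dd^{-\frac12(n-\ell)},0,\dots,0)^{\mathrm{t}}$. Every entry of $M$ lies in $K$ by hypothesis, and every entry of $b$ lies in $K$ because $\dd^{-1}\in K$; since $K$ is a ring, each Cramer determinant $\det(M_i)$, obtained by replacing the $i$-th column of $M$ by $b$, again lies in $K$. Thus $c_{\fkt_i}=\det(M_i)/\det(M)$ will lie in $K$ as soon as we know that $\det(M)$ is a \emph{unit} of $K$, and this is the only point that requires argument.

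To see that $\det(M)$ is a unit we exploit its reduction modulo the ideal $K\dd^{-1}$ together with the structure of the entries of $M$: the diagonal entries satisfy $p_j^{(j)}\equiv1$ and the off-diagonal entries satisfy $p_j^{(k)}\in K\dd^{-1}$. Expanding the determinant as a sum over $\sym{m}$, the identity permutation contributes $\prod_j p_j^{(j)}\equiv1 \pmod{K\dd^{-1}}$, while any non-identity permutation displaces at least two indices and hence contributes a product containing at least two off-diagonal factors; such a product lies in $(K\dd^{-1})^2\subseteq K\dd^{-1}$, since $K\dd^{-1}$ is an ideal. Therefore $\det(M)\equiv1 \pmod{K\dd^{-1}}$. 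By the characterisation of units in $K$ recorded in Section~\ref{sec:brauer} (an element $x\in K$ is a unit precisely when $x\equiv\pm1 \bmod K\dd^{-1}$), this forces $\det(M)$ to be invertible in $K$.

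Combining the two steps, Cramer's rule gives $c_{\fkt_i}=\det(M_i)/\det(M)\in K$ for each $i$, as claimed; invertibility of $\det(M)$ simultaneously confirms that the system has a unique solution. The only substantive step is the determinant computation, and the mild obstacle there is purely combinatorial bookkeeping: one must check that every off-diagonal term really does fall into $K\dd^{-1}$, which follows from the fact that a non-trivial permutation cannot fix all but one index. Crucially, no passage to the fraction field $\Q(\dd)$ is needed, precisely because $\dd^{-1}$ is ``small'' in $K$ and $M$ is a unipotent-type perturbation of the identity modulo $K\dd^{-1}$.
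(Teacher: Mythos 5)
Your proof is correct and takes essentially the same approach as the paper: Cramer's rule, the observation that each $\det(M_i)$ lies in $K$ because all entries do, and the fact that $\det(M)\equiv 1 \bmod K\dd^{-1}$ makes it a unit of $K$. The only difference is that you spell out the Leibniz-expansion argument for $\det(M)\equiv 1$, which the paper asserts without detail in the paragraph preceding the proposition.
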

\begin{proof}
  Denote by $M$ the $m\times m$ matrix in \eqref{eq:matrix}. Since $\det M\equiv1$ mod $K\dd^{-1}$, it is a unit in $K$. An application of Cramer's rule then shows that
\[c_{\fkt_i}=\frac{\det M_i}{\det M},\]
 where $M_i$ is the matrix obtained by replacing the $i$-th column of $M$ by $(-\dd^{-\frac{1}{2}(n-\ell)},0,\dots,0)^T$. Therefore we must show that $\det M_i$ is an element of $K$. But in the construction of $M_i$ we are simply replacing the $p_i^{(j)}(\dd)$ by either $0$ or $-\dd^{-\frac{1}{2}(n-\ell)}$, which are also elements of $K$. Therefore $\det M_i\in K$, and the result follows.
\end{proof}
The above proves the main theorem of this paper, that the condition \eqref{eq:necessary} is also sufficient.
\begin{thm}\label{thm:main}
  For each $\fkt\in\T_n(\ell)$ let $D_\fkt=(A_\fkt)_\Sigma$, where $A_\fkt\in\J_n$ is any pair partition such that $Sp(A_\fkt)=\fkt$. Setting $X_n(\ell)=\sum_{\fkt\in\T_n(\ell)}c_\fkt D_\fkt$ for some $c_\fkt\in K$ and $\overline{u}=\overline{u_1}\,\overline{u_3}\dots\overline{u_{n-\ell-1}}$, the equation
  \[\overline{u}X_n(\ell)=-\overline{u}\]
is always solvable in the $c_\fkt$. Moreover, by defining $\phi_n(\ell)=1+X_n(\ell)$ we obtain the splitting idempotent corresponding to the short exact sequence
\[\ses{\overline{J_n}(\ell)}{B_n}{B_n/\overline{J_n}(\ell)}.\]
\end{thm}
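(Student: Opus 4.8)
The plan is to verify the two conditions of Lemma~\ref{lem:splitting} for $\phi_n(\ell)=1+X_n(\ell)$, but rather than checking them directly over $K$ I would first pass to the fraction field $\Q(\dd)$ and then descend. Condition~(i) is immediate and independent of the choice of scalars: by Lemma~\ref{lem:basis} the element $X_n(\ell)=\sum_{\fkt\in\T_n(\ell)}c_\fkt D_\fkt$ lies in $Z_{\sym{n}}(B_n)\cap\overline{J_n}(\ell)$, since each $\fkt\in\T_n(\ell)$ has at most $\ell$ entries equal to $P$ and hence $D_\fkt$ is a sum of diagrams with at most $\ell$ propagating lines; thus $\phi_n(\ell)\equiv1$ modulo $\overline{J_n}(\ell)$ for any values of the $c_\fkt$. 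The real content is therefore to produce scalars for which condition~(ii) holds.

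For the hard direction I would argue by existence-plus-uniqueness instead of verifying $\overline{J_n}(\ell)\phi_n(\ell)=0$ by hand. Over $\Q(\dd)$ the Brauer algebra $B_n\otimes_K\Q(\dd)$ is semisimple, so the short exact sequence splits and Lemma~\ref{lem:splitting} supplies a genuine central splitting idempotent $\phi$. Being central it lies in the commutant of $\sym{n}$, so by the $\Q(\dd)$-form of Lemma~\ref{lem:basis} it has a unique expansion $\phi=1+\sum_{\fkt\in\T_n(\ell)}\tilde c_\fkt D_\fkt$ with $\tilde c_\fkt\in\Q(\dd)$, the constant term being $1$ precisely because $\phi\equiv1$ modulo $\overline{J_n}(\ell)$. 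Condition~(ii) applied to the particular element $\overline{u}\in\overline{J_n}(\ell)$ then gives $\overline{u}\phi=0$, i.e.\ $\overline{u}(\phi-1)=-\overline{u}$, which is exactly the necessary condition \eqref{eq:necessary} with the $\tilde c_\fkt$ in place of the $c_\fkt$.

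It then remains to see that \eqref{eq:necessary} has a unique solution and that this solution is defined over $K$; this is what the matrix analysis in \eqref{eq:matrix} and the two preceding propositions accomplish. The proposition on maximal powers of $\dd$ shows that, comparing coefficients of the chosen representatives $A_{\fkt_k}$ in $\overline{u}X_n(\ell)$, the resulting $m\times m$ matrix $M$ has diagonal entries $\equiv1$ and off-diagonal entries in $K\dd^{-1}$, so $\det M\equiv1$ modulo $K\dd^{-1}$ is a unit of $K$ and the system is uniquely solvable; Cramer's rule together with the final proposition places the solution in $K$. Since the coefficients $\tilde c_\fkt\in\Q(\dd)$ of $\phi$ also solve this same unique system, we must have $\tilde c_\fkt=c_\fkt\in K$. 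Hence $\phi=\phi_n(\ell)$ already lies in $B_n$ over $K$; as conditions~(i) and~(ii) are equalities between elements of $B_n$ that become true after extending scalars to $\Q(\dd)$, they already hold over $K$, and Lemma~\ref{lem:splitting} identifies $\phi_n(\ell)$ as the splitting idempotent.

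The main obstacle, and the reason for the detour through $\Q(\dd)$, is the gap between the single equation \eqref{eq:necessary} and the full condition $\overline{J_n}(\ell)\phi_n(\ell)=0$: solving one equation for $\overline{u}$ does not visibly imply $d\phi_n(\ell)=0$ for every $d\in\overline{J_n}(\ell)$. The existence of a genuine central idempotent over $\Q(\dd)$ is what closes this gap, since it guarantees that the unique solution of \eqref{eq:necessary} is the restriction of an element satisfying the full two-sided annihilation condition. An alternative, more combinatorial route would be to prove directly that $\overline{J_n}(\ell)$ is spanned by $B_n\,\overline{u}\,\sym{n}$ and to propagate $\overline{u}X_n(\ell)=-\overline{u}$ across this spanning set using the invariance of $X_n(\ell)$ under $\sym{n}$-conjugation (Proposition~\ref{prop:spore}); I expect the diagram bookkeeping in that approach to be the genuinely laborious part, which the semisimplicity argument lets us sidestep.
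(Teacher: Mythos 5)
Your proof is correct and follows the paper's own route: derive the single necessary condition $\overline{u}X_n(\ell)=-\overline{u}$, show via the matrix $M$ of \eqref{eq:matrix} (with $\det M\equiv 1$ mod $K\dd^{-1}$, hence a unit in $K$) that it is uniquely solvable with solution in $K$, and conclude that this unique solution must give the splitting idempotent. The one place you go beyond the paper's write-up is in making explicit that the a priori existence of the splitting idempotent over $\Q(\dd)$ (by generic semisimplicity) is what upgrades the necessary condition to a sufficient one; the paper asserts this sufficiency with that existence step left implicit, so your version is, if anything, the more complete account of the same argument.
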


\section{Representation theory and primitive central idempotents}
\label{sec:prim}

In this section we will study the Brauer algebra over a field of characteristic zero, which for our purposes amounts to extending scalars of the ring $K$ to $\Q\otimes_\Z K$ and specialising $\dd$ to an element of $\Z$. We will assume some familiarity with the representation theory of the Brauer algebra over a field (see for instance \cite{cdm}, \cite{cmpx}, \cite{grahamlehrer}, \cite{rui}). In particular, the algebra $B_n$ is cellular \cite{grahamlehrer}, and thus comes equipped with cell modules. These cell modules are indexed by integer partitions of $n,n-2,\dots,0/1$, and generically so too are the simple modules. Write $\Delta_n(\lambda)$ (resp. $L_n(\lambda)$) for the cell (resp. simple) module indexed by the partition $\lambda$.

The family $B_n$ $(n\geq0)$ of Brauer algebras form a tower of recollement, in the sense of \cite{cmpx}. We therefore have a family of localisation functors
\[F_n:B_n\textbf{-mod}\rightarrow B_{n-2}\textbf{-mod}\]
and globalisation functors
\[G_n:B_n\textbf{-mod}\rightarrow B_{n+2}\textbf{-mod}.\]
For all $n\geq0$ and $B_n$-modules $M$, we have $F_{n+2}G_n(M)\cong M$, and each $G_n$ is a full embedding. Moreover for all partitions $\lambda\vdash n,n-2,\dots,0/1$,
\begin{align*}
  F_n(\Delta_n(\lambda))&\cong\begin{cases}
    \Delta_{n-2}(\lambda)&\text{if }\lambda\vdash n-2,n-4,\dots,0/1\\
    0&\text{if }\lambda\vdash n,\text{ and}
    \end{cases}\\
  G_n(\Delta_n(\lambda))&\cong\Delta_{n+2}(\lambda).
\end{align*}
In the generic case over a field of characteristic zero or $p>n$ the Brauer algebra is semisimple, and the cell modules are both simple and indecomposable projective, so are generated by a primitive central idempotent $\phi_n(\lambda)$. Therefore $\phi_n(\ell)$ decomposes into a sum of $\phi_n(\lambda)$ where $\lambda$ is a partition of $\lambda\vdash\ell+2,\ell+4,\dots,n$. For $\ell+2<n$ this decomposition is not always easily obtained. However when $\ell=n-2$ we have the following:
\begin{lem}\label{lem:primidemps}
  For $\lambda\vdash n$,
\[\phi_n(\lambda)=\phi_n(n-2)e_\lambda,\]
where $e_\lambda$ is the idempotent in $\Q\sym{n}$ corresponding to the Specht module $S^\lambda$, viewed as an element of $B_n$.
\end{lem}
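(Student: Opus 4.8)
The plan is to work in the split semisimple setting assumed in this section (so $\dd$ is specialised to make $B_n$ semisimple over a field of characteristic zero) and to exploit the fact that in a split semisimple algebra a central element is completely determined by the scalars through which it acts on the simple modules. Concretely, $B_n\cong\bigoplus_\mu M_{d_\mu}(\Q)$ with blocks indexed by $\mu\vdash n,n-2,\dots$, the $\phi_n(\mu)$ are the block units, and $Z(B_n)=\bigoplus_\mu\Q\,\phi_n(\mu)$. I will show that $\phi_n(n-2)e_\lambda$ acts on each cell (= simple) module $\Delta_n(\mu)$ as the scalar $\delta_{\lambda\mu}$ (nonzero only for $\mu=\lambda\vdash n$); this is exactly the scalar profile of the block idempotent $\phi_n(\lambda)$, and equality of the two central elements follows.

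The key computation is the action on the cell modules. As recorded just before the statement, for $\ell=n-2$ the only partitions with more than $n-2$ propagating lines are those of $n$, so $\phi_n(n-2)=\sum_{\mu\vdash n}\phi_n(\mu)$ is the identity of the ``top'' blocks; hence $\phi_n(n-2)$ acts as the identity on $\Delta_n(\mu)$ for $\mu\vdash n$ and as zero on every $\Delta_n(\mu)$ with $\mu\vdash m<n$. For $\mu\vdash n$ the cell module $\Delta_n(\mu)$ has propagating number $n$, carries no arcs, and is exactly the Specht module $S^\mu$ with $\sym{n}\subset B_n$ acting in the usual way (and the $\overline{u_i}$ acting as zero). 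Therefore $e_\lambda$, being the central idempotent of $\Q\sym{n}$ cutting out $S^\lambda$, acts on $\Delta_n(\mu)$ as $\delta_{\lambda\mu}\cdot\mathrm{id}$. Combining the two, $\phi_n(n-2)e_\lambda$ acts as $\delta_{\lambda\mu}\cdot\mathrm{id}$ on each $\Delta_n(\mu)$ with $\mu\vdash n$ and as $0$ on all lower cell modules.

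It then remains to assemble these facts. Because $\phi_n(n-2)$ is central and idempotent and commutes with $e_\lambda$, the product is automatically idempotent; because it acts as a scalar on every simple module it lies in $Z(B_n)$; and since its scalar profile coincides with that of $\phi_n(\lambda)$, the determinacy of central elements by their scalar actions gives $\phi_n(n-2)e_\lambda=\phi_n(\lambda)$. The one genuinely load-bearing point is centrality: $e_\lambda$ by itself is central only in $\Q\sym{n}$, not in $B_n$, so it is essential that multiplying by $\phi_n(n-2)$ restricts attention to the top blocks, where $B_n$ and $\Q\sym{n}$ agree. I expect this to be the main obstacle, and the scalar-action criterion is what disposes of it cleanly; equivalently one could observe that $x\mapsto\phi_n(n-2)x$ is an algebra isomorphism $\Q\sym{n}\xrightarrow{\sim}\phi_n(n-2)B_n$ (a dimension count against $n!=\sum_{\mu\vdash n}f_\mu^2$, together with faithfulness of $\sym{n}$ on $\bigoplus_{\mu\vdash n}S^\mu$, forces bijectivity), and transport the primitive central idempotent $e_\lambda$ through it, identifying the image by its action on $\Delta_n(\lambda)$.
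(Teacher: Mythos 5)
Your proposal is correct and rests on exactly the same two facts as the paper's proof: that $\phi_n(n-2)$ annihilates $\overline{J_n}(n-2)$ (so kills every cell module $\Delta_n(\mu)$ with $|\mu|<n$ and acts as the identity on those with $\mu\vdash n$), and that for $\mu\vdash n$ the module $\Delta_n(\mu)$ is the Specht module $S^\mu$ with the arcs acting as zero, so $e_\lambda$ acts there by $\delta_{\lambda\mu}$. The only difference is cosmetic: the paper concludes by identifying the module generated by $\phi_n(n-2)e_\lambda$ with $\Delta_n(\lambda)$, whereas you conclude by matching scalar profiles of central elements across all simples, which is a slightly more explicit way of justifying the same final identification.
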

\begin{proof}
  We show that the action of $B_n$ on the module generated by $\phi_n(n-2)e_\lambda$ is the same as that on the cell module $\Delta_n(\lambda)$. In the case of the latter, all elements with fewer than $n$ propagating lines act as zero, and the remaining act as they would on the Specht module $S^\lambda$.

Since $\phi_n(n-2)$ is central, we need not worry about the order of multiplication above. Now from Lemma \ref{lem:splitting}, we see that $\phi_n(n-2)$ acts as zero on any element with fewer than $n$ propagating lines, and as the identity on the rest. Since all that remains are elements with $n$ propagating lines, they then act on the idempotent $e_\lambda$ as they do in the Specht module $S^\lambda$, proving the lemma.
\end{proof}
When specialising $\dd$ or moving to a field of characteristic $p>0$, it is possible that the Brauer algebra may no longer be semisimple. This will be reflected in the idempotents $\phi_n(\ell)$ and $\phi_n(\lambda)$. Indeed, some of these may no longer be well defined, and will need to be added together in order to clear any singularities. This corresponds to having a non-trivial block in the algebra.

Note first that the denominators in $\phi_n(\ell)$ are all monic polynomials in $\Z[\dd]$, and so are well defined in all characteristics. Assume then that we are working in a field of characteristic zero. Rui's semisimplicity criterion \cite[Theorem 1.2]{rui} tells us that these denominators will vanish when $\dd$ is an element of a certain subset of the integers.

Continuing with the characteristic zero case, suppose $\lambda\vdash n$. If the denominators appearing in $\phi_n(\lambda)$ do not vanish at a chosen value of $\dd\in K$ then the cell module $\Delta_n(\lambda)$ is equal to the simple module $L_n(\lambda)$ and there is a corresponding idempotent in $B_n$ splitting
\[\ses{\mathrm{Ann}(L_n(\lambda))}{B_n}{B_n/\mathrm{Ann}(L_n(\lambda))}.\]
Thus there can be no map $L_n(\lambda)\hookrightarrow\Delta_n(\mu)$ for any partition $\mu\neq\lambda$. Equivalently, if a denominator does vanish then there is a corresponding map. Moreover, if $m$ is the largest propagating number among the elements with diverging coefficients, then $\mu\vdash m$.

We can use globalisation and localisation to overcome the difficulty of computing the $\phi_n(\lambda)$ for $\lambda\vdash \ell<n$. Indeed, due to the cellular structure of $B_n$, if $L_n(\lambda)$ appears as a composition factor of any $\Delta_n(\mu)$ then $|\mu|\leq|\lambda|$. Therefore by localising to $B_\ell$, we do not lose any data about which modules $L_n(\lambda)$ appears in. We will make use of this in the examples in the next section.
\section{Examples}
\label{sec:eg}

Given their links to representation theory (cf. \cite[Chapter 1]{benson} for example), it should not be surprising that in many cases the calculation of central idempotents is a highly non-trivial task, see for instance Murphy's construction of central idempotents in the symmetric group \cite{murphy}. The method described above gives us a general process that, given enough time, will produce central idempotents of $B_n$ and from there some of the primitive central idempotents. In low ranks it is even possible to calculate the $\phi_n(\ell)$ (and some of the $\phi_n(\lambda)$) explicitly by hand. We will do this for $n\leq4$, but for the sake of brevity will suppress many of the details. Instead we will refer to several features of the idempotents that can be interpreted in a representation theoretic manner.

\subsection{Splitting idempotents}\label{sec:exsplit}
Our first task will be to calculate the idempotent $\phi_2(0)$ in $B_2$. By Lemma \ref{lem:basis}, a basis of $Z_{\sym{2}}(B_2)$ is indexed by the tableaux
\[\fks^{(0)}_1=\young(NS),\hspace{1em}\fks^{(2)}_1=\young(PP),\hspace{1em}\text{and }\fks^{(2)}_2=\young(P,P).\]
The tableau corresponding to diagrams with no propagating lines is $\fks^{(0)}_1$, and so
\[X_2(0)=a_{\fks^{(0)}_1}D_{\fks^{(0)}_1},\]
where $D_{\fks^{(0)}_1}=u_1\in B_2$. Theorem \ref{thm:main} requires $\overline{u_1}X_2(0)=-\overline{u_1}$, which is satisfied by setting $a_{\fks^{(0)}_1}=-\frac{1}{\dd}$. Therefore
\[\phi_2(0)=1-\frac{1}{\dd}u_1.\]

We invite the reader to calculate the idempotent $\phi_3(1)$ in $B_3$, as we will not make use of it in the rest of this paper. The case $n=4$ will outline the method and provide enough detail to omit the $n=3$ case.

We now calculate the idempotents $\phi_4(0)$ and $\phi_4(2)$ in $B_4$. This requires us to first find a basis of $Z_{\sym{4}}(B_4)$, which again by Lemma \ref{lem:basis} above is indexed by the tableaux
\[\fkt^{(0)}_1=\young(NSNS),\hspace{1em}\fkt^{(0)}_2=\young(NS,NS),\]
\[\fkt^{(2)}_1=\young(NSPP),\hspace{1em}\fkt^{(2)}_2=\young(NPSP),\hspace{1em}\fkt^{(2)}_3=\young(NSP,P),\hspace{1em}\fkt^{(2)}_4=\young(NS,PP),\hspace{1em}\fkt^{(2)}_5=\young(NS,P,P),\]
\[\fkt^{(4)}_1=\young(PPPP),\hspace{1em}\fkt^{(4)}_2=\young(PPP,P),\hspace{1em}\fkt^{(4)}_3=\young(PP,PP),\hspace{1em}\fkt^{(4)}_4=\young(PP,P,P),\hspace{1em}\text{and }\fkt^{(4)}_5=\young(P,P,P,P).\]
Considering first diagrams with no propagating lines, i.e. those tableaux $\fkt^{(i)}_j$ with $i=0$, we have
\[X_4(0)=b_{\fkt^{(0)}_1}D_{\fkt^{(0)}_1}+b_{\fkt^{(0)}_2}D_{\fkt^{(0)}_2},\]
where
\begin{align*}
  D_{\fkt^{(0)}_1}&=\tikz[scale=0.4,baseline=1em]{
\foreach \x in {1,...,4}{
  \draw (\x,0) node {$\bullet$};
  \draw (\x,2) node {$\bullet$};}
  \draw[thick] (1,2) to[out=-90,in=-90] (2,2);
  \draw[thick] (3,2) to[out=-90,in=-90] (4,2);
  \draw[thick] (1,0) to[out=90,in=90] (3,0);
  \draw[thick] (2,0) to[out=90,in=90] (4,0);}+
\tikz[scale=0.4,baseline=1em]{
\foreach \x in {1,...,4}{
  \draw (\x,0) node {$\bullet$};
  \draw (\x,2) node {$\bullet$};}
  \draw[thick] (1,2) to[out=-90,in=-90] (2,2);
  \draw[thick] (3,2) to[out=-90,in=-90] (4,2);
  \draw[thick] (1,0) to[out=90,in=90] (4,0);
  \draw[thick] (2,0) to[out=90,in=90] (3,0);}+
\tikz[scale=0.4,baseline=1em]{
\foreach \x in {1,...,4}{
  \draw (\x,0) node {$\bullet$};
  \draw (\x,2) node {$\bullet$};}
  \draw[thick] (1,2) to[out=-90,in=-90] (3,2);
  \draw[thick] (2,2) to[out=-90,in=-90] (4,2);
  \draw[thick] (1,0) to[out=90,in=90] (2,0);
  \draw[thick] (3,0) to[out=90,in=90] (4,0);}+
\tikz[scale=0.4,baseline=1em]{
\foreach \x in {1,...,4}{
  \draw (\x,0) node {$\bullet$};
  \draw (\x,2) node {$\bullet$};}
  \draw[thick] (1,2) to[out=-90,in=-90] (3,2);
  \draw[thick] (2,2) to[out=-90,in=-90] (4,2);
  \draw[thick] (1,0) to[out=90,in=90] (4,0);
  \draw[thick] (2,0) to[out=90,in=90] (3,0);}+
 \tikz[scale=0.4,baseline=1em]{
\foreach \x in {1,...,4}{
  \draw (\x,0) node {$\bullet$};
  \draw (\x,2) node {$\bullet$};}
  \draw[thick] (1,2) to[out=-90,in=-90] (4,2);
  \draw[thick] (3,2) to[out=-90,in=-90] (2,2);
  \draw[thick] (1,0) to[out=90,in=90] (2,0);
  \draw[thick] (3,0) to[out=90,in=90] (4,0);}+
\tikz[scale=0.4,baseline=1em]{
\foreach \x in {1,...,4}{
  \draw (\x,0) node {$\bullet$};
  \draw (\x,2) node {$\bullet$};}
  \draw[thick] (1,2) to[out=-90,in=-90] (4,2);
  \draw[thick] (3,2) to[out=-90,in=-90] (2,2);
  \draw[thick] (1,0) to[out=90,in=90] (3,0);
  \draw[thick] (2,0) to[out=90,in=90] (4,0);}\text{ and}\\
D_{\fkt^{(0)}_2}&=\tikz[scale=0.4,baseline=1em]{
\foreach \x in {1,...,4}{
  \draw (\x,0) node {$\bullet$};
  \draw (\x,2) node {$\bullet$};}
  \draw[thick] (1,2) to[out=-90,in=-90] (2,2);
  \draw[thick] (3,2) to[out=-90,in=-90] (4,2);
  \draw[thick] (1,0) to[out=90,in=90] (2,0);
  \draw[thick] (3,0) to[out=90,in=90] (4,0);}+
\tikz[scale=0.4,baseline=1em]{
\foreach \x in {1,...,4}{
  \draw (\x,0) node {$\bullet$};
  \draw (\x,2) node {$\bullet$};}
  \draw[thick] (1,2) to[out=-90,in=-90] (3,2);
  \draw[thick] (2,2) to[out=-90,in=-90] (4,2);
  \draw[thick] (1,0) to[out=90,in=90] (3,0);
  \draw[thick] (2,0) to[out=90,in=90] (4,0);}+
\tikz[scale=0.4,baseline=1em]{
\foreach \x in {1,...,4}{
  \draw (\x,0) node {$\bullet$};
  \draw (\x,2) node {$\bullet$};}
  \draw[thick] (1,2) to[out=-90,in=-90] (4,2);
  \draw[thick] (3,2) to[out=-90,in=-90] (2,2);
  \draw[thick] (1,0) to[out=90,in=90] (4,0);
  \draw[thick] (2,0) to[out=90,in=90] (3,0);}.
\end{align*}
Setting $\overline{u}=\overline{u_1}\,\overline{u_3}$ and requiring $\overline{u}X_4(0)=-\overline{u}$, we obtain the system of equations
\[\left(\begin{array}{cc}
          1+\dd^{-1}&\dd^{-1}\\
          2\dd^{-1}&1
        \end{array}\right)
\left(\begin{array}{c}
        b_{\fkt^{(0)}_1}\\
        b_{\fkt^{(0)}_2}
      \end{array}\right)=
\left(\begin{array}{c}
        0\\
        -\dd^{-2}
        \end{array}\right).\]
Solving this gives
\[b_{\fkt^{(0)}_1}=\frac{1}{\dd(\dd+2)(\dd-1)}\hspace{1em}\text{and}\hspace{1em}b_{\fkt^{(0)}_2}=-\frac{\dd+1}{\dd(\dd+2)(\dd-1)},\]
and we have $\phi_4(0)=1+b_{\fkt^{(0)}_1}D_{\fkt^{(0)}_1}+b_{\fkt^{(0)}_2}D_{\fkt^{(0)}_2}$.

We will now consider diagrams with at most 2 propagating lines, i.e. those tableaux $t^{(i)}_j$ with $i=0,2$, so that
\[X_4(2)=\sum_{i=1}^5c_{\fkt^{(2)}_i}D_{\fkt^{(2)}_i}+\sum_{i=1}^2c_{\fkt^{(0)}_i}D_{\fkt^{(0)}_i}.\]
This time we set $\overline{u}=\overline{u_1}$ and obtain the following system of $7$ linearly independent equations.
\[\left(\begin{array}{ccccccc}
          1+\dd^{-1}&\dd^{-1}&2\dd^{-1}&0&2\dd^{-1}&0&0\\
          2\dd^{-1}&1&0&2\dd^{-1}&0&\dd^{-1}&\dd^{-1}\\
          0&0&1+\dd^{-1}&\dd^{-1}&\dd^{-1}&\dd^{-1}&0\\
          0&0&2\dd^{-1}&1&2\dd^{-1}&0&0\\
          0&0&\dd^{-1}&\dd^{-1}&1+\dd^{-1}&0&\dd^{-1}\\
          0&0&4\dd^{-1}&0&0&1&0\\
          0&0&0&0&4\dd^{-1}&0&1
\end{array}\right)
\left(\begin{array}{c}
        c_{\fkt^{(0)}_1}\\
        c_{\fkt^{(0)}_2}\\
        c_{\fkt^{(2)}_1}\\
        c_{\fkt^{(2)}_2}\\
        c_{\fkt^{(2)}_3}\\
        c_{\fkt^{(2)}_4}\\
        c_{\fkt^{(2)}_5}
\end{array}\right)=
\left(\begin{array}{c}
        0\\
        0\\
        0\\
        0\\
        0\\
        0\\
        -\dd^{-1}
\end{array}\right).\]
Upon solving this we see that
\begin{align*}
  c_{\fkt^{(0)}_1}&=-\frac{3\dd+2}{(\dd-2)(\dd-1)(\dd+2)(\dd+4)}\\
  c_{\fkt^{(0)}_2}&=\frac{\dd^2+3\dd+6}{(\dd-2)(\dd-1)(\dd+2)(\dd+4)}\\
  c_{\fkt^{(2)}_1}&=-\frac{1}{(\dd-2)(\dd+2)(\dd+4)}\\
  c_{\fkt^{(2)}_2}&=-\frac{2}{\dd(\dd-2)(\dd+4)}\\
  c_{\fkt^{(2)}_3}&=\frac{\dd+3}{(\dd-2)(\dd+2)(\dd+4)}\\
  c_{\fkt^{(2)}_4}&=\frac{4}{\dd(\dd-2)(\dd+2)(\dd+4)}\\
  c_{\fkt^{(2)}_5}&=-\frac{\dd^3+4\dd^2-4}{\dd(\dd-2)(\dd+2)(\dd+4)},
\end{align*}
and $\displaystyle\phi_4(2)=1+\sum_{i=1}^5c_{\fkt^{(2)}_i}D_{\fkt^{(2)}_i}+\sum_{i=1}^2c_{\fkt^{(0)}_i}D_{\fkt^{(0)}_i}$.
\begin{rem*}
  Note that the values of $\dd$ for which $\phi_4(0)$ and $\phi_4(2)$ are well-defined coincide with the values of $\dd$ for which $B_4(\dd)$ is semisimple over a field of characteristic zero (see \cite{rui}).
\end{rem*}

\subsection{Connections with representation theory}

We begin by studying the case $n=2$. From \cite{murphy}, the primitive central idempotents in $\Q\sym{2}$ are
\begin{align*}
  e_{(2)}&=\frac{1}{2}(1+(1~2))\text{, and}\\
  e_{(1^2)}&=\frac{1}{2}(1-(1~2)).
\end{align*}
By Lemma \ref{lem:primidemps}, we then have
\begin{align*}
  \phi_2((2))&=\phi_2(0)e_{(2)}\\
             &=\frac{1}{2}(1+(1~2))-\frac{1}{\dd}u_1\\
             &=\frac{1}{2}(D_{\fks^{(2)}_2}+D_{\fks^{(2)}_1})-\frac{1}{\dd}D_{\fks^{(0)}_1}\\
  \phi_2((1^2))&=\phi_2(0)e_{(1^2)}\\
             &=\frac{1}{2}(1-(1~2))\\
             &=\frac{1}{2}(D_{\fks^{(2)}_2}-D_{\fks^{(2)}_1}).
\end{align*}
When $\dd=0$, the coefficient of $\phi_2((2))$ corresponding to elements with zero propagating lines diverges, indicating a non-zero homomorphism $L_2((2))\hookrightarrow\Delta_2(\emptyset)$.

Moving now to the $n=4$ case, note first that we can globalise the $n=2$ case and see that when $\dd=0$, we have a non-zero homomorphism
\[\Delta_4(2)\rightarrow\Delta_4(\emptyset)/M,\]
where $M\subset\Delta_4(\emptyset)$ is a submodule.  

We now calculate the idempotent $e_\lambda\in \Q\sym{4}$ with $\lambda=(3,1)$ using the results of \cite{murphy}: 
\begin{align*}
  e_{(3,1)}&=\frac{3}{8}+\frac{1}{8}(1~2)_\Sigma-\frac{1}{8}(1~2)(3~4)_\Sigma-\frac{1}{8}(1~2~3~4)_\Sigma\\
           &=\frac{1}{8}\left(-D_{\fkt^{(4)}_1}-D_{\fkt^{(4)}_3}+D_{\fkt^{(4)}_4}+3D_{\fkt^{(4)}_5}\right).
\end{align*}
Hence 
\begin{align}
  \phi_4((3,1))&=\phi_4(2)e_{(3,1)}\nonumber\\
             &=e_{(3,1)}+\frac{1}{8\dd(\dd+2)}\left(\dd D_{\fkt^{(2)}_1}+2(\dd+2)D_{\fkt^{(2)}_2}-\dd D_{\fkt^{(2)}_3}-4D_{\fkt^{(2)}_4}-4(\dd+1)D_{\fkt^{(2)}_5}\right).\label{eq:phi_4(3,1)}
\end{align}
From \eqref{eq:phi_4(3,1)} above we see that when $\dd=0$ or $-2$, the idempotent $\phi_4((3,1))$ is no longer well-defined. The coefficients that blow up are attached to the diagrams with two propagating lines, signifying the appearance of $L_4(3,1)$ as a submodule of $\Delta_4(\mu)$ for $\mu\vdash2$.

Finally we will show that a sum of idempotents that individually are not defined at a certain value of $\dd$, can in fact be well defined for this $\dd$. In particular we will compute
\begin{equation}\label{eq:idempsum}
\phi_4(0)-\phi_4(2)+\phi_4((3,1))
\end{equation}
and show that it is well defined at $\delta=-2$, even though each constituent is not. Since each part is a linear combination of the $D_{\fkt^{(i)}_j}$ we can sum each of the corresponding coefficients. Using the order of the $\fkt^{(i)}_j$ from Section \ref{sec:exsplit} we have
\begin{align*}
  b_{\fkt^{(0)}_1}-c_{\fkt^{(0)}_1}&=\frac{4}{\dd(\dd-2)(\dd+4)}\\
  b_{\fkt^{(0)}_2}-c_{\fkt^{(0)}_2}&=\frac{-2(\dd+2)}{\dd(\dd-2)(\dd+4)}\\
  -c_{\fkt^{(2)}_1}+\frac{1}{8(\dd+2)}&=\frac{\dd}{8(\dd-2)(\dd+4)}\\
  -c_{\fkt^{(2)}_2}+\frac{1}{4\dd}&=\frac{\dd+2}{4(\dd-2)(\dd+4)}\\
  -c_{\fkt^{(2)}_3}-\frac{1}{8(\dd+2)}&=-\frac{\dd+8}{8(\dd-2)(\dd+4)}\\
  -c_{\fkt^{(2)}_4}-\frac{1}{2\dd(\dd+2)}&=-\frac{1}{2(\dd-2)(\dd+4)}\\
  -c_{\fkt^{(2)}_5}-\frac{\dd+1}{2\dd(\dd+2)}&=\frac{\dd+3}{2(\dd-2)(\dd+4)}.
\end{align*}
Note that the $D_{\fkt^{(4)}_j}$ appear only in $\phi_4((3,1))$, so we have omitted their coefficients here. We see then that \eqref{eq:idempsum} is well defined at $\dd=-2$. Since the element $\phi_4(0)$ kills all modules $\Delta_4(\lambda)$ with $|\lambda|=0$ and $\phi_4(2)$ kills all $\Delta_4(\lambda)$ with $|\lambda|\leq2$, this sum will kill all cell modules except $\Delta_4((3,1))$, $\Delta_4((2))$ and $\Delta_4((1^2))$. We have already seen that when $\dd=-2$ there is a homomorphism $\Delta_4((3,1))\rightarrow\Delta_4(\mu)$ for some $\mu\vdash2$, and from the block characterisation of \cite{cdm} we see that in fact $\mu=(1^2)$. From the same characterisation we see that $\Delta_4((2))$ is alone in its block. Therefore the sum \eqref{eq:idempsum} corresponds to a union of these two blocks of $B_4$.
\setcounter{section}{0}
\renewcommand{\thesection}{\Alph{section}}
\section{The case $n=6$}

\allowdisplaybreaks
The examples above illustrate the method of computing central
idempotents and how to glean information about representation theory
from them. In this supplementary section we calculate the splitting
idempotents in $B_6$, a $10395$-dimensional algebra, to show that the
method we derived does indeed give idempotents that were previously
inaccessible.

Starting with the splitting idempotent for $\overline{J_6}(0)$, we have
\[\fku^{(0)}_1=\young(NSNSNS),\hspace{1em}\fku^{(0)}_2=\young(NSNS,NS),\hspace{1em}\text{and }\fku^{(0)}_3=\young(NS,NS,NS).\]
Then $\phi_6(0)=1+\sum_{i=1}^3\alpha^{(0)}_iD_{\fku^{(0)}_i}$, where
\begin{align*}
  \alpha^{(0)}_1&=-\frac{2}{\dd(\dd-2)(\dd-1)(\dd+2)(\dd+4)},\\
  \alpha^{(0)}_2&=\frac{1}{\dd(\dd-2)(\dd-1)(\dd+4)},\\
  \alpha^{(0)}_3&=\frac{\dd^2+3\dd-2}{\dd(\dd-2)(\dd-1)(\dd+2)(\dd+4)}.
\end{align*}
Next, for the splitting idempotent for $\overline{J_6}(2)$ we have (in addition to the above)
\[\fku^{(2)}_1=\young(NSNSPP),\hspace{.7em}\fku^{(2)}_2=\young(NSNPSP),\hspace{.7em}\fku^{(2)}_3=\young(NSPNPS),\hspace{0.7em}\fku^{(2)}_4=\young(NSPNSP),\]
\[\fku^{(2)}_5=\young(NSNSP,P),\hspace{1em}\fku^{(2)}_6=\young(NSNS,PP),\hspace{1em}\fku^{(2)}_7=\young(NSPP,NS),\hspace{1em}\fku^{(2)}_8=\young(NPSP,NS),\]
\[\fku^{(2)}_9=\young(NSNS,P,P),\hspace{1em}\fku^{(2)}_{10}=\young(NSP,NSP),\hspace{1em}\fku^{(2)}_{11}=\young(NSP,NS,P),\hspace{1em}\fku^{(2)}_{12}=\young(NS,NS,PP),\hspace{1em}\text{and }\fku^{(2)}_{13}=\young(NS,NS,P,P).\]
Thus $\phi_6(2)=1+\sum_{i=1}^3\beta^{(0)}_iD_{\fku^{(0)}_i}+\sum_{i=1}^{13}\beta^{(2)}_iD_{\fku^{(2)}_i}$, where
\begin{align*}
  \beta^{(0)}_1&=\frac{13\dd^2+25\dd+18}{(\dd-3)(\dd-2)(\dd-1)(\dd+1)(\dd+2)(\dd+4)(\dd+6)},\\
  \beta^{(0)}_2&=-\frac{4(\dd^2+3\dd+3)}{(\dd-3)(\dd-2)(\dd-1)(\dd+1)(\dd+4)(\dd+6)},\\
  \beta^{(0)}_3&=\frac{2(\dd^4+7\dd^3+13\dd^2+13\dd-6)}{(\dd-3)(\dd-2)(\dd-1)(\dd+1)(\dd+2)(\dd+4)(\dd+6)},\\
  \\
  \beta^{(2)}_1&=\frac{3(\dd^2+\dd+2)}{\dd(\dd-3)(\dd-2)(\dd+1)(\dd+2)(\dd+4)(\dd+6)},\\
  \beta^{(2)}_2&=\frac{5\dd+6}{\dd(\dd-3)(\dd-2)(\dd+1)(\dd+4)(\dd+6)},\\
  \beta^{(2)}_3&=\frac{5\dd+6}{\dd(\dd-3)(\dd-2)(\dd+1)(\dd+4)(\dd+6)},\\
  \beta^{(2)}_4&=\frac{2(2\dd^2+3\dd-6)}{\dd(\dd-3)(\dd-2)(\dd+1)(\dd+2)(\dd+4)(\dd+6)},\\
  \beta^{(2)}_5&=-\frac{2\dd^3+10\dd^2+3\dd-6}{\dd(\dd-3)(\dd-2)(\dd+1)(\dd+2)(\dd+4)(\dd+6)},\\
  \beta^{(2)}_6&=-\frac{4(5\dd+6)}{\dd(\dd-3)(\dd-2)(\dd+1)(\dd+2)(\dd+4)(\dd+6)},\\
  \beta^{(2)}_7&=-\frac{(\dd+3)(\dd^2+\dd+2)}{\dd(\dd-3)(\dd-2)(\dd+1)(\dd+2)(\dd+4)(\dd+6)},\\
  \beta^{(2)}_8&=-\frac{2}{(\dd-3)(\dd-2)(\dd+1)(\dd+6)},\\
  \beta^{(2)}_9&=\frac{\dd^4+7\dd^3+8\dd^2-8\dd-24}{\dd(\dd-3)(\dd-2)(\dd+1)(\dd+2)(\dd+4)(\dd+6)},\\
  \beta^{(2)}_{10}&=-\frac{\dd^3+6\dd^2+18\dd+12}{\dd(\dd-3)(\dd-2)(\dd+1)(\dd+2)(\dd+4)(\dd+6)},\\
  \beta^{(2)}_{11}&=\frac{\dd^4+7\dd^3+7\dd^2-11\dd-6}{\dd(\dd-3)(\dd-2)(\dd+1)(\dd+2)(\dd+4)(\dd+6)},\\
  \beta^{(2)}_{12}&=\frac{8}{(\dd-3)(\dd-2)(\dd+1)(\dd+2)(\dd+6)},\\
  \beta^{(2)}_{13}&=-\frac{\dd^4+8\dd^3+7\dd^2-40\dd-44}{(\dd-3)(\dd-2)(\dd+1)(\dd+2)(\dd+4)(\dd+6)}.
\end{align*}
Finally, we compute the splitting idempotent for $\overline{J_6}(4)$. The remaining tableaux to consider are
\[\fku^{(4)}_1=\young(NSPPPP),\hspace{1em}\fku^{(4)}_2=\young(NPSPPP),\hspace{1em}\fku^{(4)}_3=\young(NPPSPP),\hspace{1em}\fku^{(4)}_4=\young(NSPPP,P),\]
\[\fku^{(4)}_5=\young(NPSPP,P),\hspace{1em}\fku^{(4)}_6=\young(NSPP,PP),\hspace{1em}\fku^{(4)}_7=\young(NPSP,PP),\hspace{1em}\fku^{(4)}_8=\young(PPPP,NS),\]
\[\fku^{(4)}_9=\young(NSPP,P,P),\hspace{1em}\fku^{(4)}_{10}=\young(NPSP,P,P),\hspace{1em}\fku^{(4)}_{11}=\young(NSP,PPP),\hspace{1em}\fku^{(4)}_{12}=\young(NSP,PP,P),\]
\[\fku^{(4)}_{13}=\young(PPP,NS,P),\hspace{1em}\fku^{(4)}_{14}=\young(NSP,P,P,P),\hspace{1em}\fku^{(4)}_{15}=\young(NS,PP,PP),\hspace{1em}\fku^{(4)}_{16}=\young(NS,PP,P,P),\hspace{1em}\text{and }\fku^{(4)}_{17}=\young(NS,P,P,P,P).\]
Therefore $\phi_6(4)=1+\sum_{i=1}^3\gamma^{(0)}_iD_{\fku^{(0)}_i}+\sum_{i=1}^{13}\gamma^{(2)}_iD_{\fku^{(2)}_i}+\sum_{i=1}^{17}\gamma^{(4)}_iD_{\fku^{(4)}_i}$, where
\begin{align*}
  \gamma^{(0)}_1&=-\frac{17\dd-18}{(\dd-4)(\dd-3)(\dd-2)(\dd+1)(\dd+6)(\dd+8)},\\
  \gamma^{(0)}_2&=\frac{3\dd^3+23\dd^2+66\dd-24}{(\dd-4)(\dd-3)(\dd-2)(\dd+1)(\dd+4)(\dd+6)(\dd+8)},\\
  \gamma^{(0)}_3&=-\frac{\dd^4+10\dd^3+19\dd^2-2\dd+408}{(\dd-4)(\dd-3)(\dd-2)(\dd+1)(\dd+4)(\dd+6)(\dd+8)},\\
  \\
  \gamma^{(2)}_1&=-\frac{7\dd^5+11\dd^4-27\dd^3-78\dd^2+216\dd-192}{\dd(\dd-4)(\dd-3)(\dd-2)(\dd-1)(\dd+1)(\dd+2)(\dd+4)(\dd+6)(\dd+8)},\\
  \gamma^{(2)}_2&=-\frac{11\dd^4+47\dd^3-6\dd^2-208\dd-96}{(\dd-4)(\dd-3)(\dd-2)(\dd-1)(\dd+1)(\dd+2)(\dd+4)(\dd+6)(\dd+8)},\\
  \gamma^{(2)}_3&=-\frac{11\dd^4+47\dd^3-6\dd^2-208\dd-96}{(\dd-4)(\dd-3)(\dd-2)(\dd-1)(\dd+1)(\dd+2)(\dd+4)(\dd+6)(\dd+8)},\\
  \gamma^{(2)}_4&=-\frac{2(4\dd^5+6\dd^4-29\dd^3+26\dd^2-136\dd+192)}{\dd(\dd-4)(\dd-3)(\dd-2)(\dd-1)(\dd+1)(\dd+2)(\dd+4)(\dd+6)(\dd+8)},\\
  \gamma^{(2)}_5&=\frac{5\dd^6+31\dd^5-62\dd^4-233\dd^3+298\dd^2+216\dd-192}{\dd(\dd-4)(\dd-3)(\dd-2)(\dd-1)(\dd+1)(\dd+2)(\dd+4)(\dd+6)(\dd+8)},\\
  \gamma^{(2)}_6&=\frac{4(15\dd^2+10\dd-88)}{(\dd-4)(\dd-3)(\dd-2)(\dd-1)(\dd+2)(\dd+4)(\dd+6)(\dd+8)},\\
  \gamma^{(2)}_7&=\frac{\dd^6+7\dd^5+31\dd^4-47\dd^3-78\dd^2+152\dd-192}{\dd(\dd-4)(\dd-3)(\dd-2)(\dd-1)(\dd+1)(\dd+2)(\dd+4)(\dd+6)(\dd+8)},\\
  \gamma^{(2)}_8&=\frac{2(\dd^5+9\dd^4+30\dd^3-52\dd-240)}{(\dd-4)(\dd-3)(\dd-2)(\dd-1)(\dd+1)(\dd+2)(\dd+4)(\dd+6)(\dd+8)},\\
  \gamma^{(2)}_9&=-\frac{3\dd^6+26\dd^5-29\dd^4-400\dd^3+192\dd^2+1256\dd-544}{(\dd-4)(\dd-3)(\dd-2)(\dd-1)(\dd+1)(\dd+2)(\dd+4)(\dd+6)(\dd+8)},\\
  \gamma^{(2)}_{10}&=\frac{\dd^6+9\dd^5+70\dd^4+78\dd^3-588\dd^2-80\dd+384}{\dd(\dd-4)(\dd-3)(\dd-2)(\dd-1)(\dd+1)(\dd+2)(\dd+4)(\dd+6)(\dd+8)},\\
  \gamma^{(2)}_{11}&=-\frac{\dd^7+10\dd^6+20\dd^5-42\dd^4-249\dd^3+42\dd^2+536\dd-192}{\dd(\dd-4)(\dd-3)(\dd-2)(\dd-1)(\dd+1)(\dd+2)(\dd+4)(\dd+6)(\dd+8)},\\
  \gamma^{(2)}_{12}&=-\frac{8(\dd^5+7\dd^4+36\dd^3-26\dd^2-240\dd+96)}{\dd(\dd-4)(\dd-3)(\dd-2)(\dd-1)(\dd+1)(\dd+2)(\dd+4)(\dd+6)(\dd+8)},\\
  \gamma^{(2)}_{13}&=\frac{\dd^8+11\dd^7+7\dd^6-171\dd^5-148\dd^4+716\dd^3+16\dd^2-192\dd+768}{\dd(\dd-4)(\dd-3)(\dd-2)(\dd-1)(\dd+1)(\dd+2)(\dd+4)(\dd+6)(\dd+8)},\\
  \\
  \gamma^{(4)}_1&=-\frac{\dd^3-14\dd^2-28\dd-48}{(\dd-4)(\dd-3)(\dd-2)(\dd+1)(\dd+2)(\dd+4)(\dd+6)(\dd+8)},\\
  \gamma^{(4)}_2&=-\frac{2(2\dd^4+4\dd^3-11\dd^2-18\dd-40)}{(\dd-4)(\dd-3)(\dd-2)(\dd-1)(\dd+1)(\dd+2)(\dd+4)(\dd+6)(\dd+8)},\\
  \gamma^{(4)}_3&=-\frac{2(3\dd^4+12\dd^3-16\dd^2-128\dd+192)}{\dd(\dd-4)(\dd-3)(\dd-2)(\dd-1)(\dd+2)(\dd+4)(\dd+6)(\dd+8)},\\
  \gamma^{(4)}_4&=\frac{\dd^4+4\dd^3-22\dd^2-32\dd+28}{(\dd-3)(\dd-2)(\dd-1)(\dd+1)(\dd+2)(\dd+4)(\dd+6)(\dd+8)},\\
  \gamma^{(4)}_5&=\frac{3\dd^5+20\dd^4-37\dd^3-200\dd^2+132\dd+208}{(\dd-4)(\dd-3)(\dd-2)(\dd-1)(\dd+1)(\dd+2)(\dd+4)(\dd+6)(\dd+8)},\\
  \gamma^{(4)}_6&=\frac{2(13\dd^3+10\dd^2-62\dd-24)}{(\dd-4)(\dd-3)(\dd-2)(\dd-1)(\dd+1)(\dd+2)(\dd+4)(\dd+6)(\dd+8)},\\
  \gamma^{(4)}_7&=\frac{4(10\dd^4+23\dd^3-120\dd^2-72\dd+96)}{\dd(\dd-4)(\dd-3)(\dd-2)(\dd-1)(\dd+1)(\dd+2)(\dd+4)(\dd+6)(\dd+8)},\\
  \gamma^{(4)}_8&=\frac{8(\dd^3-14\dd^2-28\dd-48)}{\dd(\dd-4)(\dd-3)(\dd-2)(\dd+1)(\dd+2)(\dd+4)(\dd+6)(\dd+8)},\\
  \gamma^{(4)}_9&=-\frac{\dd^4+4\dd^3-29\dd^2-2\dd+100}{(\dd-4)(\dd-3)(\dd-2)(\dd+1)(\dd+2)(\dd+6)(\dd+8)},\\
  \gamma^{(4)}_{10}&=-\frac{2(\dd^7+9\dd^6-9\dd^5-151\dd^4+20\dd^3+432\dd^2+16\dd-192)}{\dd(\dd-4)(\dd-3)(\dd-2)(\dd-1)(\dd+1)(\dd+2)(\dd+4)(\dd+6)(\dd+8)},\\
  \gamma^{(4)}_{11}&=\frac{21(\dd^2+2\dd-4)}{(\dd-3)(\dd-2)(\dd-1)(\dd+1)(\dd+2)(\dd+4)(\dd+6)(\dd+8)},\\
  \gamma^{(4)}_{12}&=-\frac{4(3\dd^3+17\dd^2-27\dd-76)}{(\dd-4)(\dd-3)(\dd-2)(\dd+1)(\dd+2)(\dd+4)(\dd+6)(\dd+8)},\\
  \gamma^{(4)}_{13}&=-\frac{6}{(\dd-2)(\dd-1)(\dd+2)(\dd+4)(\dd+8)},\\
  \gamma^{(4)}_{14}&=\frac{\dd^7+10\dd^6-8\dd^5-212\dd^4-11\dd^3+1042\dd^2+60\dd-1008}{(\dd-4)(\dd-3)(\dd-2)(\dd-1)(\dd+1)(\dd+2)(\dd+4)(\dd+6)(\dd+8)},\\
  \gamma^{(4)}_{15}&=-\frac{16(13\dd^3+10\dd^2-62\dd-24)}{\dd(\dd-4)(\dd-3)(\dd-2)(\dd-1)(\dd+1)(\dd+2)(\dd+4)(\dd+6)(\dd+8)},\\
  \gamma^{(4)}_{16}&=\frac{4(\dd^5+8\dd^4-\dd^3-50\dd^2-16\dd+96)}{\dd(\dd-4)(\dd-3)(\dd-2)(\dd+1)(\dd+2)(\dd+4)(\dd+6)(\dd+8)},\\
  \gamma^{(4)}_{17}&=-\frac{\dd^9+11\dd^8-7\dd^7-295\dd^6-106\dd^5+2252\dd^4+352\dd^3-4464\dd^2+96\dd+1152}{\dd(\dd-4)(\dd-3)(\dd-2)(\dd-1)(\dd+1)(\dd+2)(\dd+4)(\dd+6)(\dd+8)}.
\end{align*}

\section{Efficiency of the construction}\label{sec:leducram}

The expression of $B_n(\delta)$ as a multimatrix algebra in \cite{leducram} allows one to calculate the primitive central idempotents of $B_n$ directly by summing the elements corresponding to certain paths in the Bratteli diagram, see Figure \ref{fig:bratteli} below.
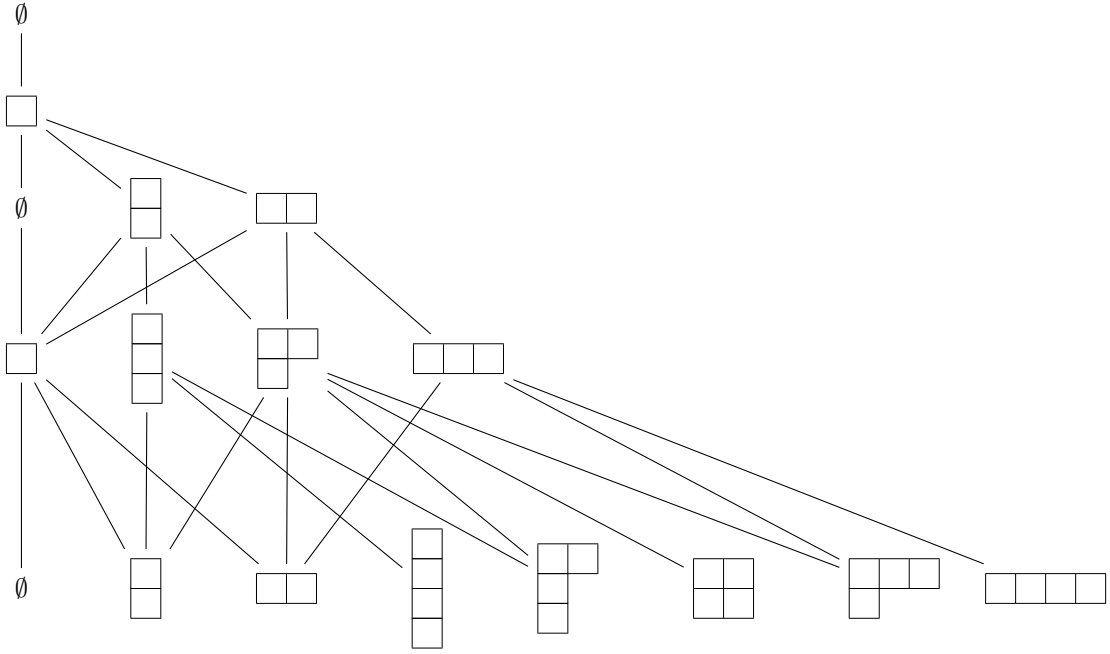
\begin{figure}[h!]\label{fig:bratteli}
\begin{tikzpicture}
  % \node (A) {$k=0:$};
  % \node[below=2em of A] (B) {$k=1:$};
  % \node[below=2em of B] (C) {$k=2:$};
  % \node[below=4em of C] (D) {$k=3:$};
  % \node[below=7em of D] (E) {$k=4:$};

  \node (A0) {\;$\emptyset$\;};

  \node[below=2em of A0] (B1) {$\yng(1)$};
  
  \node[below=2em of B1] (C0) {\;$\emptyset$\;};
  \node[right= of C0] (C11) {$\yng(1,1)$};
  \node[right= of C11] (C2) {$\yng(2)$};

  \node[below= 4em of C0] (D1) {$\yng(1)$};
  \node[right=of D1] (D111) {$\yng(1,1,1)$};
  \node[right=of D111] (D21) {$\yng(2,1)$};
  \node[right=of D21] (D3) {$\yng(3)$};

  \node[below=7em of D1] (E0) {\;$\emptyset$\;};
  \node[right=of E0] (E11) {$\yng(1,1)$};
  \node[right=of E11] (E2) {$\yng(2)$};
  \node[right=of E2] (E1111) {$\yng(1,1,1,1)$};
  \node[right=of E1111] (E211) {$\yng(2,1,1)$};
  \node[right=of E211] (E22) {$\yng(2,2)$};
  \node[right=of E22] (E31) {$\yng(3,1)$};
  \node[right=1em of E31] (E4) {$\yng(4)$};

  \draw (A0) to (B1);

  \draw (B1) to (C0);
  \draw (B1) to (C2);
  \draw (B1) to (C11);

  \draw (C0) to (D1);
  \draw (C2) to (D3);
  \draw (C2) to (D1);
  \draw (C2) to (D21);
  \draw (C11) to (D1);
  \draw (C11) to (D21);
  \draw (C11) to (D111);

  \draw (D1) to (E0);
  \draw (D1) to (E11);
  \draw (D1) to (E2);
  \draw (D111) to (E11);
  \draw (D111) to (E1111);
  \draw (D111) to (E211);
  \draw (D21) to (E2);
  \draw (D21) to (E11);
  \draw (D21) to (E211);
  \draw (D21) to (E22);
  \draw (D21) to (E31);
  \draw (D3) to (E2);
  \draw (D3) to (E31);
  \draw (D3) to (E4);
\end{tikzpicture}
\caption{The Bratteli diagram of $B_4$.}
\end{figure}
In particular we have a basis of $B_n$ given by $\{E_{ST}\}$ where $(S,T)$ are pairs of paths from row $0$ to the same point in row $n$ of the Bratteli diagram. Multiplication of these elements is given by the rule
\[E_{ST}E_{UV}=\delta_{TU}E_{SV}.\] 
 For $n=3$, we have the following:
\begin{align*}
  P_1&=\emptyset\rightarrow\yng(1)\rightarrow\yng(1,1)\rightarrow\yng(1,1,1)\\
  \\
  Q_1&=\emptyset\rightarrow\yng(1)\rightarrow\yng(1,1)\rightarrow\yng(2,1)\\
  Q_2&=\emptyset\rightarrow\yng(1)\rightarrow\yng(2)\rightarrow\yng(2,1)\\
  \\
  R_1&=\emptyset\rightarrow\yng(1)\rightarrow\yng(2)\rightarrow\yng(3)\\
  \\
  S_1&=\emptyset\rightarrow\yng(1)\rightarrow\yng(1,1)\rightarrow\yng(1)\\
  S_2&=\emptyset\rightarrow\yng(1)\rightarrow\yng(2)\rightarrow\yng(1)\\
  S_3&=\emptyset\rightarrow\yng(1)\rightarrow\emptyset\rightarrow\yng(1)
\end{align*}
Now by \cite[Theorem 6.22]{leducram} we can express the generators $u_i,\sigma_i$ of $B_n$ as linear combinations of the $E_{ST}$ over the field $\overline{\Q(\dd)}$. In particular for $B_3$ we have
\begin{align*}
  u_1&=\dd E_{S_1S_1}\\
  s_1&=-E_{P_1P_1}+E_{R_1R_1}-E_{Q_1Q_1}+E_{Q_2Q_2}+E_{S_1S_1}-E_{S_2S_2}+E_{S_3S_3}\\
  u_2&=\frac{1}{\dd}E_{S_1S_1}+\frac{\dd-1}{2}E_{S_2S_2}+\frac{(x-1)(x+2)}{2x}E_{S_3S_3}+\frac{\sqrt{x(x-1)}}{\sqrt{2}x}(E_{S_1S_2}+E_{S_2S_1})\\&\pushright{+\frac{\sqrt{(x-1)(x+2)}}{\sqrt{2}x}(E_{S_1S_3}+E_{S_3S_1})+\frac{\sqrt{x(x-1)^2(x+2)}}{2x}(E_{S_2S_3}+E_{S_3S_2})}\\
  s_2&=-E_{P_1P_1}+E_{R_1R_1}+\frac{1}{2}(E_{Q_1Q_1}-E_{Q_2Q_2})+\frac{\sqrt{3}}{2}(E_{Q_1Q_2}+E_{Q_2Q_1})+\frac{1}{\dd}E_{S_1S_1}+\frac{1}{2}E_{S_2S_2}\\&\pushright{+\frac{(x-2)}{2x}E_{S_3S_3}-\frac{\sqrt{x(x-1)}}{\sqrt{2}x}(E_{S_1S_2}+E_{S_2S_1})+\frac{\sqrt{(x-1)(x+2)}}{\sqrt{2}x}(E_{S_1S_3}+E_{S_3S_1})}\\&\pushright{+\frac{\sqrt{x(x-1)^2(x+2)}}{2x(x-1)}(E_{S_2S_3}+E_{S_3S_2})}.
\end{align*}
Since the set $\{u_1,s_1,u_2,s_2\}$ generates a $\Z[\dd]$-basis for $B_3$ we can find this basis in terms of the $E_{ST}$, and hence find an expression for the $E_{ST}$ in terms of the standard diagram basis. To calculate the primitive central idempotent $\phi_n(\lambda)$ with this basis we must sum the elements $E_{SS}$ where $S$ is a path ending at $\lambda$. For instance for $\phi_3((1))$ we find the sum
\begin{multline*}
  E_{S_1S_1}+E_{S_2S_2}+E_{S_3S_3}=\frac{\dd+1}{(\dd-1)(\dd+2)}(u_1+u_2+s_1u_2s_1)\\-\frac{1}{(\dd-1)(\dd+2)}(u_1u_2+u_2u_1+u_1s_2+u_2s_1+s_1u_2+s_2u_1).
\end{multline*}
This is simply the element $X_4(0)$ in the notation of this paper, which is already a much easier calculation. Moreover in order to write the generators of the algebra in terms of the $E_{ST}$ we must calculate a coefficient for each pair of paths $(S,T)$ ending at the same partition. The number of such pairs grows dramatically with $n$, as does the dimension of the algebra $B_n$ and hence the calculation to convert from one basis to the other. Finally the coefficients in the intermediate steps do not reside in some integral or otherwise ``nice'' ring, which is a property of the method described in this paper.

\medskip\medskip
\noindent {\bf Acknowledgements.}
We would like to thank Rosa Orellana for a helpful comment on the
draft and  EPSRC for financial support under
grant EP/L001152/1.

\bibliographystyle{plain}
\bibliography{KMP-BrIde}
\end{document}